\theoremstyle{plain}
\newtheorem{thm}{Theorem}[section]
\theoremstyle{plain}
\newtheorem{lemma}[thm]{Lemma} 
\newtheorem{prop}[thm]{Proposition}
\newtheorem{remark}[thm]{Remark}
\newcommand\Ad{{\operatorname{Ad}}}
\newcommand\Span{{\operatorname{Span}}}
\newcommand\card{{\operatorname{card}}}
\newcommand\ind{{\operatorname{ind}}}
\newcommand\GCD{{\operatorname{GCD}}}
\newcommand\id{{\operatorname{id}}}
\begin{document}

\title{ The $K$-Theory of Toeplitz $C^*$-Algebras of Right-Angled Artin Groups } \par
\author{ Nikolay A. Ivanov }
\date{\today}

\address{\hskip-\parindent
Nikolay Ivanov  \\
Department of Mathematics \\
University of Toronto \\
Toronto, Ontario \\
Canada, M5S 2E4}
\email{nivanov@fields.utoronto.ca}

\begin{abstract}
Toeplitz $C^*$-algebras of right-angled Artin groups were studied by Crisp and Laca. 
They are a special case of the Toeplitz 
$C^*$-algebras $\mathcal{T}(G, P)$ associated with quasi-latice ordered groups $(G, P)$ introduced by Nica. 
Crisp and Laca proved that the so called 
"boundary quotients" $C^*_Q(\Gamma)$ of $C^*(\Gamma)$ are simple and purely infinite. 
For a certain class of 
finite graphs $\Gamma$ we show that $C^*_Q(\Gamma)$ can be represented as a full 
corner of a crossed product of an appropriate $C^*$-subalgebra of $C^*_Q(\Gamma)$ built by using $C^*(\Gamma')$,
where $\Gamma'$ is a subgraph of $\Gamma$ with one less vertex, by the group $\mathbb{Z}$. 
Using induction on the
number of the vertices of $\Gamma$ we show that $C^*_Q(\Gamma)$ are 
nuclear and belong to the small bootstrap class. 
We also use the Pimsner-Voiculescu exact sequence 
to find their $K$-theory. 
Finally we use the Kirchberg-Phillips classification theorem to show that those 
$C^*$-algebras are isomorphic to tensor products of $\mathcal{O}_n$ with $1 \leq n \leq \infty$. 
\end{abstract}

\maketitle

\section{Introduction}

Toeplitz $C^*$-Algebras of right-angled Artin Groups generalize both the Toeplitz algebra and the Cuntz algebras.
Coburn showed in \cite{C67} that the $C^*$-algebra, generated by a single nonunitaty isometry is unique, i.e.
every two $C^*$-algebras, each generated by a single nonunitary isometry are $*$-isomorphic. 
Similar uniqueness 
theorems about $C^*$-algebras generated by isometries were proved by Cuntz \cite{C77}, Douglas \cite{D72}, Murphy
\cite{M87}, and others. 
Laca and Raeburn in \cite{LR96} and Crisp and Laca in \cite{CL02} proved such uniquness 
theorems for a large class of $C^*$-algebras, corresponding to quasi-lattice ordered groups $(G, P)$. 
One of the
key point they use was to project onto the "diagonal" $C^*$-algebra generated by the range projections of those
isometries, an idea originating from \cite{D72}. 
\par
These $C^*$-algebras can be viewed as crossed products of commutative $C^*$-algebras (the $C^*$-algebras
generated by the range projections of the isometries) by semigroups of endomorphisms. 
Crisp and Laca used techniques
from \cite{ELQ02} about such crossed products together with the uniqueness theorems mentioned above to prove a 
sructure theorem for the universal $C^*$-algebra $C^*(G, P)$ (which by the uniqueness theorems is
isomorphic to the "reduced one" $\mathcal{T}(G, P)$) for a large class of quasi-lattice ordered groups $(G, P)$. 
We will now state 
\cite[Corollary 8.5]{CL07} and \cite[Theorem 6.7]{CL07} and use them throuought this note. A graph will always
mean a simple graph with countable set of vertices.

\begin{thm} [\cite{CL07}, Theorem 6.7] \label{thm:1}
Suppose that $\Gamma$ is a graph with a set of vertices $S$ (finite or infinite) such that $\Gamma^{\mathrm{opp}}$ 
has no isolated vertices. 
Then the universal $C^*$-algebra with generators $\{ V_s | s \in S \}$ subject to the relations: \\ 
(1) $V_s^* V_s = I$ for each $s \in S$; \\ 
(2) $V_s V_t = V_t V_s$ and $V_s^* V_t = V_t V_s^*$ if $s$ and $t$ are adjacent in $\Gamma$; \\ 
(3) $V_s^* V_t = 0$ if $s$ and $t$ are distinct and not adjacent in $\Gamma$; \\ 
(4) $\prod_{s \in S_{\lambda}} (I - V_s V_s^*) = 0$ for each $S_{\lambda} \subset S$ spanning a finite
connected component of $\Gamma^{\mathrm{opp}}$, \\
is purely infinite and simple.
\end{thm}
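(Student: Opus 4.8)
The plan is to recognize the universal algebra defined by relations (1)--(4), which is the boundary quotient $C^*_Q(\Gamma)$, as a semigroup crossed product of its diagonal subalgebra and then to prove simplicity and pure infiniteness by analysing the induced action on the spectrum. First I would set up the right-angled Artin group $A_\Gamma$ together with its positive cone $P$, under which $(A_\Gamma, P)$ is quasi-lattice ordered in the sense of Nica; relations (1)--(3) are exactly the defining relations of the Nica--Toeplitz algebra $\mathcal{T}(A_\Gamma, P)$, so the algebra in question is the quotient of $\mathcal{T}(A_\Gamma, P)$ by the ideal generated by the boundary relations (4). Following the crossed-product philosophy recalled in the introduction and the techniques of \cite{ELQ02}, I would present $C^*_Q(\Gamma)$ as $C(\partial X)\rtimes_\alpha P$, where the diagonal $D$ generated by the range projections $V_s V_s^*$ and their products is identified with $C(X)$ for a totally disconnected compact space $X$ of directed hereditary subsets of $P$, and where relation (4) cuts $X$ down to the closed invariant subset $\partial X$ of maximal filters (the Nica boundary). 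The faithful conditional expectation onto $D$ is the tool that transports $C^*$-algebraic questions about $C^*_Q(\Gamma)$ to dynamical questions about $\partial X$.

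Next I would study the action $\alpha$ of $P$ (and the associated partial action of $A_\Gamma$) on $\partial X$ by the shift maps dual to $x\mapsto V_x(\cdot)V_x^*$, and prove two properties. For minimality, I would show that every nonempty open invariant subset of $\partial X$ is all of $\partial X$: given two basic clopen sets, one can translate one inside the other by applying a suitable generator, because in a maximal filter every generator or an adjacent one eventually appears, which is precisely what relation (4) enforces on spanning subsets of $\Gamma^{\mathrm{opp}}$. For topological freeness, I would identify the points of $\partial X$ with nontrivial isotropy as those carrying a periodicity under a group element and show that they form a nowhere dense set. This is exactly where the hypothesis that $\Gamma^{\mathrm{opp}}$ has no isolated vertices enters: an isolated vertex of $\Gamma^{\mathrm{opp}}$ would be a generator commuting with all the others, producing a central Toeplitz direction and an open set of configurations fixed by that generator, which would destroy both freeness and simplicity.

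With minimality and topological freeness in hand, simplicity of $C^*_Q(\Gamma)\cong C(\partial X)\rtimes_\alpha P$ follows from the standard criterion for minimal, topologically free systems. To pass between the full and reduced pictures I would invoke amenability of the transformation groupoid, noting that even though $A_\Gamma$ itself need not be amenable (free groups occur as right-angled Artin groups), the boundary action is amenable in the manner of free-group boundary actions, so the two completions coincide. For pure infiniteness I would establish paradoxicality locally: for every nonempty clopen $U\subseteq\partial X$ I would produce a generator-type element whose partial homeomorphism compresses $U$ onto a proper clopen subset, witnessing that the projection $1_U$ is Murray--von Neumann equivalent to a proper subprojection of itself and hence infinite. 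Since $D$ separates ideals and the expectation is faithful, this local infiniteness propagates, and together with simplicity it yields that every nonzero hereditary subalgebra contains an infinite projection, so $C^*_Q(\Gamma)$ is purely infinite by the Anantharaman-Delaroche criterion.

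The hard part will be the topological freeness step. Establishing minimality, amenability, and the paradoxical compression is fairly mechanical once the boundary spectrum is correctly described, but pinning down the isotropy of $\partial X$ requires a careful reading of which group elements can fix a maximal filter, and this is controlled entirely by the combinatorics of commuting versus non-commuting generators encoded in $\Gamma$ and $\Gamma^{\mathrm{opp}}$. The delicate point is to prove that the periodic configurations are nowhere dense, and it is precisely the absence of isolated vertices in $\Gamma^{\mathrm{opp}}$ that guarantees enough noncommutativity to force aperiodicity on a dense set.
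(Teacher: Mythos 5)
This theorem is imported verbatim from Crisp and Laca (\cite{CL07}, Theorem 6.7); the paper offers no proof of its own, so the only meaningful comparison is with the cited source. Your outline --- realize $C^*_Q(\Gamma)$ as a (partial) crossed product of the continuous functions on the boundary of the Nica spectrum, then establish minimality, topological freeness, amenability, and local contractivity --- is essentially the strategy Crisp and Laca themselves follow via the machinery of \cite{ELQ02}, so it is the same approach as the authoritative proof, with the caveat that the genuinely hard verifications (topological freeness and amenability of the boundary action, and the precise role of relation (4), which for an isolated vertex of $\Gamma^{\mathrm{opp}}$ would force the corresponding generator to be a \emph{central unitary} rather than a Toeplitz isometry) are asserted here rather than carried out.
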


We will denote the $C^*$-algebra from this theorem by $C^*_Q(\Gamma)$. 

\begin{thm} [\cite{CL07}, Corollary 8.5] \label{thm:2}
Suppose that $\Gamma$ is a graph with a set of vertices $S$ (finite or infinite) such that 
$\Gamma^{\mathrm{opp}}$ has no
isolated vertices. 
Let $C^*(\Gamma)$ denote the universal $C^*$-algebra with generators $\{ V_s | s \in S \}$ 
subject to the relations: \\ 
(1) $V_s^* V_s = I$ for each $s \in S$; \\ 
(2) $V_s V_t = V_t V_s$ and $V_s^* V_t = V_t V_s^*$ if $s$ and $t$ are adjacent in $\Gamma$; \\ 
(3) $V_s^* V_t = 0$ if $s$ and $t$ are distinct and not adjacent in $\Gamma$; \\ 
Then each quotient of $C^*(\Gamma)$ is obtained by imposing a further collection of relations of the form \\ 
(R) $\prod_{s \in S_{\lambda}} (I - V_s V_s^*) = 0$, where each $S_{\lambda} \subset S$ spans a finite
union of finite connected components of $\Gamma^{\mathrm{opp}}$.
\end{thm}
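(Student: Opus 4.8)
\section*{Proof proposal for Theorem \ref{thm:2}}

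The plan is to realize $C^*(\Gamma)$ as the crossed product of a commutative ``diagonal'' algebra by the positive cone $P$ of the associated right-angled Artin group $G$, and then to translate the classification of quotients into a combinatorial problem about invariant subsets of the spectrum of that diagonal. First I would recall the Nica covariance relations: the range projections $Q_s = V_s V_s^*$ generate a commutative $C^*$-subalgebra $\mathcal{D} = \overline{\Span}\{V_x V_x^* : x \in P\}$, and they satisfy the lattice relations $Q_x Q_y = Q_{x \vee y}$ when $x,y$ admit a least common upper bound and $Q_x Q_y = 0$ otherwise. Writing $\mathcal{D} \cong C(\Omega)$, the spectrum $\Omega$ is a closed subset of $\{0,1\}^P$ consisting of the characters $\omega$ compatible with these relations; concretely each such $\omega$ is determined by the hereditary directed set $\{x \in P : \omega(Q_x) = 1\}$. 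The semigroup $P$ acts on $\Omega$ by partial homeomorphisms, and the conditional expectation $E : C^*(\Gamma) \to \mathcal{D}$ coming from the gauge coaction of $G$ exhibits $C^*(\Gamma)$ as the $C^*$-algebra $C^*(\mathcal{G})$ of an étale groupoid $\mathcal{G}$ with unit space $\Omega$, equivalently as the semigroup crossed product $\mathcal{D} \rtimes P$.

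The second step is to reduce from arbitrary ideals to gauge-invariant ones. Here I would invoke the amenability of the quasi-lattice ordered group $(G,P)$ in the sense of Laca--Raeburn and Nica, which guarantees that $E$ is faithful on positive elements and that the coaction is normal; a standard argument then shows that every ideal $J \trianglelefteq C^*(\Gamma)$ is coaction-invariant and hence completely determined by the invariant ideal $J \cap \mathcal{D}$ of $C(\Omega)$. Consequently each quotient is of the form $C^*(\mathcal{G}|_F)$ for a closed $P$-invariant subset $F = \Omega \setminus U \subseteq \Omega$, where $U$ is the open set on which $J \cap \mathcal{D}$ is supported. I expect this step to be technically delicate but essentially forced by the machinery from the Laca--Raeburn and Exel--Laca--Quigg framework cited in the introduction.

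The heart of the argument --- and the step I expect to be the main obstacle --- is the combinatorial identification of the closed invariant subsets $F \subseteq \Omega$, or equivalently the open invariant ideals of $C(\Omega)$, and the proof that each is cut out precisely by relations of type (R). The relation $\prod_{s \in S_\lambda}(I - Q_s) = 0$ corresponds to deleting from $\Omega$ the basic closed set $Z(S_\lambda) = \{\omega : \omega(Q_s) = 0 \text{ for all } s \in S_\lambda\}$. I would show, using the lattice relations, that $Z(S_\lambda)$ is $P$-invariant exactly when $S_\lambda$ spans a finite union of finite connected components of $\Gamma^{\mathrm{opp}}$: the connectedness in $\Gamma^{\mathrm{opp}}$ is what forces the projections $I - Q_s$ to interact in the right way so that their product remains supported on an invariant piece, while finiteness is needed for that product to lie in $\mathcal{D}$ and to be genuinely removable. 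Finally, I would prove that these basic invariant sets generate the entire lattice of closed invariant subsets --- that is, that every open invariant ideal of $C(\Omega)$ is the ideal generated by a family of the projections $P_{S_\lambda} = \prod_{s \in S_\lambda}(I - Q_s)$ --- by writing an arbitrary invariant open set as a union of such basic pieces and appealing to a compactness and approximation argument in $C(\Omega)$. Matching this lattice of invariant subsets with the collections of relations (R) then yields the claimed description of all quotients.
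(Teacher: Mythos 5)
First, a remark on scope: the paper you are reading does not prove this statement at all --- it is quoted verbatim as \cite[Corollary 8.5]{CL07} and used as a black box. So the only meaningful comparison is with the argument of Crisp and Laca themselves, and your outline does follow their general strategy: pass to the commutative diagonal $\mathcal{D} = \overline{\Span}\{V_xV_x^*\}$, identify its spectrum with a space of hereditary directed subsets of the Artin monoid, realize $C^*(\Gamma)$ as a partial crossed product (or groupoid algebra) over that spectrum, classify quotients by closed invariant subsets, and match those combinatorially with the relations (R) via the finite connected components of $\Gamma^{\mathrm{opp}}$.

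The genuine gap is in your second step. Amenability of $(G,P)$ gives a conditional expectation $E\colon C^*(\Gamma)\to\mathcal{D}$ that is faithful on positive elements, but faithfulness of $E$ does \emph{not} imply that every ideal $J$ is coaction-invariant or generated by $J\cap\mathcal{D}$; the ``standard argument'' you allude to requires, in addition, that the partial action (equivalently the restricted groupoid $\mathcal{G}|_F$) be topologically free on \emph{every} closed invariant subset $F$ of the spectrum. Without that, quotients can have ideals invisible to the diagonal. The template counterexample is the one-vertex graph, for which $C^*(\Gamma)$ is the Toeplitz algebra: its quotient $C(\mathbb{T})$ has a wealth of ideals not of the form (R), and indeed that graph is excluded by the hypothesis, since its opposite graph is a single isolated vertex. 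This is exactly where the no-isolated-vertices assumption must enter: an isolated vertex of $\Gamma^{\mathrm{opp}}$ is a generator commuting and $*$-commuting with all the others, which produces a central unitary in a suitable quotient and destroys topological freeness. Your proposal never invokes this hypothesis at the point where it is needed, which is the sign that the reduction to diagonally-induced ideals has not actually been established. Crisp and Laca devote the technical core of \cite{CL07} (the uniqueness theorems and the freeness analysis preceding Corollary 8.5) to precisely this point; the combinatorial identification you describe last is comparatively routine once that is in place.
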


We remind that by definition the opposite graph of the graph $\Gamma$ is 
$$\Gamma^{\mathrm{opp}} = \{ (v, w) | v, w \in S,\ (v, w) \notin \Gamma \}.$$
$\Gamma^{\mathrm{opp}}$ is also called the complement or the inverse of the graph $\Gamma$.
\par
Let $\Gamma$ be a finite graph with set of vertices $S$ such that the opposite graph $\Gamma^{\mathrm{opp}}$ is 
connected and has more than $1$ vertex. 
Then $C^*_Q(\Gamma)$ is the quotient of $C^*(\Gamma)$ by the ideal generated by $\underset{s \in
S}{\prod} (I - V_s V_s^*)$. 
Let $I_{\Gamma}: \langle \underset{s \in S}{\prod} (I - V_s V_s^*) \rangle_{C^*(\Gamma)}
\to C^*(\Gamma)$ be the inclusion map of this ideal, and $Q_{\Gamma}: C^*(\Gamma) 
\to C^*_Q(\Gamma)$ be the quotient map. 
Theorem \ref{thm:2} implicitly contains the uniqueness theorem
(\cite[Theorem 24]{CL02}). 
In particular we have the following faithful representation 
$\pi_{\Gamma}: C^*(\Gamma) \to \mathcal{B}(H_{\Gamma})$ 
which corresponds to $\mathcal{T}(A_{\Gamma}, A_{\Gamma}^+)$, where $A_{\Gamma} = \{ S | ss' = s's \text{ if }
(s,s') \in \Gamma \}$: \\ 
Let $H_{\Gamma}$ be the Hilbert space with an orthonormal basis 
$$\{ \mathfrak{E}[s_1, s_2, \dots, s_n] |\ n \in \mathbb{N}_0, s_1, \dots, s_n \in S \} / \sim,$$ 
where the relation $\sim$ means 
$\mathfrak{E}[s_1, s_2, \dots, s_n] \sim \mathfrak{E}[s'_1, s'_2, \dots, s'_m]$ if and only if $V_{s_1} 
\cdots V_{s_n} = V_{s'_1} \cdots V_{s'_m}$ subject to commutation relation (2) from Theorem \ref{thm:2}. \\ 
Let $\pi_{\Gamma}$ be given on a generating family of operators and vectors by 
$$\pi_{\Gamma}(V_s)(\mathfrak{E}[\emptyset]) = \mathfrak{E}[s],$$ 
$$\pi_{\Gamma}(V_s)(\mathfrak{E}[s_1, s_2, \dots, s_n]) = \mathfrak{E}[s, s_1, s_2, \dots, s_n].$$
For this representation it is true that the ideal 
$\langle \pi_{\Gamma}(\underset{s \in S}{\prod} (I - V_s V_s^*)) \rangle_{\pi_{\Gamma}(C^*(\Gamma))}$ 
coinsides with $\mathcal{K}(H_{\Gamma})$ - the compact operators on $H_{\Gamma}$. 
\par
In \cite{C77} Cuntz introduced a certain type of $C^*$-algebras $\mathcal{O}_n,\ n = 2,3, \dots, \infty$ 
generated 
by a set of isometries with mutually orthogonal ranges. 
He was able to represent $\mathcal{K} \otimes
\mathcal{O}_n$ as a crossed product of an $AF$-algebra by $\mathbb{Z}$ ($\mathcal{K}$ stands for the
$C^*$-algebra of the compact operators on a separable Hilbert space). 
There have been generalizations of these
algebras that depend on the "crossed product by $\mathbb{Z}$" idea, for example Cuntz-Krieger algebras \cite{CK80},
Cuntz-Pimsner algebras \cite{P97} and others. 
\par
In our note for a fixed finite graph with at least three vertices $\Gamma$ with $\Gamma^{\mathrm{opp}}$ connected 
we choose a subgraph $\Gamma'$ one less vertex such that $(\Gamma')^{\mathrm{opp}}$ is connected. 
Then we represent 
$C^*_Q(\Gamma)$ as a full corner of a crossed product of a $C^*$-algebra, built by using $C^*(\Gamma')$, by the
group $\mathbb{Z}$. 
After doing so we can use some results
about $C^*$-algebras which are crossed products by $\mathbb{Z}$. 
Most importantly we use the
Pimsener-Voiculescu exact sequence for the $K$-thoery (\cite{PV80}). 
Using induction on the number of the
vertices of the graph we conclude that $C^*_Q(\Gamma)$ is nuclear and belong to the small bootstrap class (see
\cite[IV.3.1]{B06}, \cite[\S22]{B98}) and thus the classification result for purely
infinite simple $C^*$-algebras of Kirchberg-Phillips \cite{Ph00} applies. 
From this we conclude that $C^*(\Gamma)$
is isomorphic to $\mathcal{O}_{1+|\chi(\Gamma)|}$, where $\chi(\Gamma)$ is an analogue of Euler
characteristic, introduced in \cite{CL07}. 
Then we extend this result to the case when $\Gamma$ is an infinite graph
with countably many vertices and such that $\Gamma^{\mathrm{opp}}$ is connected, since this graph can be 
represented as an
increasing sequence of finite subgraphs. 
The general case is a graph $\Gamma$ with at least two and at most
countably many vertices which is such that $\Gamma^{\mathrm{opp}}$ has no isolated vertex. 
It can be treated easily using Theorem \ref{thm:1} and the special cases described above. 
The conclusion is that $C^*_Q(\Gamma)$ is isomorphic to tensor
products of $\mathcal{O}_n$ for $1 \leq n \leq \infty$, where we define $\mathcal{O}_1$ to be the unital
Kirchberg algebra with ${\bf K}_0(\mathcal{O}_1) = \mathbb{Z} [1_{\mathcal{O}_1}]_0$ and 
${\bf K}_1(\mathcal{O}_1) = \mathbb{Z}$. 
A Kirchberg algebra is by definition a separable, nuclear, simple, purely infinite $C^*$-algebra that satisfies 
the Universal Coefficient Theorem. 

\section{Some $C^*$-Subalgebras of $C^*_Q(\Gamma)$ and the Crossed Product Construction}

If $\Gamma$ has two vertices and no edges, then from the construcion of $C^*(\Gamma)$ is clear that 
$C^*(\Gamma)$ is generated by isometries $V_1$ and $V_2$ with orthogonal ranges and such that 
$V_1 V_1^* + V_2 V_2^* < I$. 
This is the $C^*$-algebra $\mathcal{E}_2$ from \cite{C81} which is an extension of 
$\mathcal{O}_2$ by the compacts. Thus $C^*_Q(\Gamma) \cong \mathcal{O}_2$. 
\par
Suppose now that $\Gamma$ has a set of vertices $S$ such that $2 < \card(S) < \infty$ and suppose that 
$\Gamma^{\mathrm{opp}}$ is connected. 
Since $\Gamma^{\mathrm{opp}}$ is connected if it is not a tree we can remove an arbitrary edge from its
arbitrary cycle and the graph obtained in this way (let's denote it by $\Gamma^{\mathrm{opp}}_1$) will remain 
connected.
Continuing in this fashion in finitely many (say $l$) steps we will arrive at $\Gamma^{\mathrm{opp}}_l$ wich will 
be a tree. 
Let $s \in S$ be a "leaf" for $\Gamma^{\mathrm{opp}}_l$. 
Removing $s$ and the edge that comes out of $s$ from $\Gamma^{\mathrm{opp}}_l$ will not alter the connectedness. 
All this shows that if $\Gamma'$ is the graph, obtained from $\Gamma$ by removing the vertex $s$ and all the 
edges that come out of $s$, then its opposite graph $(\Gamma')^{\mathrm{opp}}$ will be connected.  
\par
Let $S' \subset S$ be the set of edges of $\Gamma'$. 
We can suppose that $S = \{ 1, \dots, n, n+1 \}$ and that $S' = \{ 1, \dots, n \}$ for some $n \geq 2$. 
We want to describe the words in letters 
$\{ V_1, \dots, V_n, V_{n+1}, V_1^*, \dots, V_n^*, V_{n+1}^* \}$.

\begin{lemma} \label{lemmma:words}
Every word in letters $\{ V_1, \dots, V_n, V_{n+1}, V_1^*, \dots, V_n^*, V_{n+1}^* \}$ can be written in the form 
$w_1 w_2^*$, where $w_1, w_2$ are words in letters $\{ V_1, \dots, V_n, V_{n+1} \}$.
\end{lemma}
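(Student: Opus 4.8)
The plan is to read the three defining relations from Theorem \ref{thm:2} as left-to-right rewriting rules that move every starred letter past every unstarred letter, until the word reaches the shape $w_1 w_2^*$ with $w_1, w_2$ words in $\{V_1, \dots, V_{n+1}\}$. The single obstruction to a word already having this shape is the occurrence of a subword of the form $V_s^* V_t$, that is, a starred letter immediately followed by an unstarred one. Indeed, a word containing no such subword has all of its unstarred letters preceding all of its starred letters; since a block of starred letters $V_{a_l}^* \cdots V_{a_1}^*$ is exactly $(V_{a_1} \cdots V_{a_l})^*$, such a word is already of the required form $w_1 w_2^*$. So the whole problem reduces to eliminating these subwords.

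First I would record what each relation does to a subword $V_s^* V_t$; the three cases are exhaustive. If $s = t$, relation (1) gives $V_s^* V_t = I$, so the subword is deleted. If $s \neq t$ and $s, t$ are adjacent in $\Gamma$, relation (2) gives $V_s^* V_t = V_t V_s^*$, which pulls the unstarred $V_t$ one position to the left past the star. If $s \neq t$ and $s, t$ are not adjacent, relation (3) gives $V_s^* V_t = 0$, so the entire word equals $0$, a degenerate case trivially of the stated form. Thus applying the appropriate relation at any occurrence of a $V_s^* V_t$ subword replaces the word by an equal one that is closer to the target shape.

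To make ``closer'' precise and to guarantee termination, I would assign to each word $W$ the integer $N(W)$ counting the pairs of positions $p < q$ for which the letter at $p$ is starred and the letter at $q$ is unstarred. The deletion move ($s = t$) removes two adjacent positions and hence strictly lowers $N(W)$. The commutation move ($s \neq t$, adjacent) merely exchanges a starred and an unstarred letter in adjacent positions $p, p+1$: for any third position $r$ the pair $\{p, p+1\}$ still contains exactly one starred and one unstarred letter afterwards, so the inversions between $r$ and this pair are unaffected, while the internal pair $(p, p+1)$ passes from inverted to non-inverted; hence $N(W)$ drops by exactly $1$. Since $N(W)$ is a nonnegative integer that strictly decreases at every step, the process terminates, and a terminal word is either $0$ or free of $V_s^* V_t$ subwords, so by the first paragraph it has the form $w_1 w_2^*$. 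The argument is essentially a normal-form computation; the only point requiring care is termination, and the one subtlety there is to verify that the commutation move does not spawn new $V_s^* V_t$ subwords elsewhere in the word — which is precisely the assertion that $N(W)$ genuinely decreases rather than merely changes.
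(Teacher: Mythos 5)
Your proof is correct, but it takes a genuinely different route from the paper's. The paper argues by induction on word length: it peels off the last letter, reduces to a subword $V_j^* V_i$ at the interface of $(w_2')^*$ and the new letter, and in the commutation case re-invokes the inductive hypothesis on the shorter word $w_1'(w_2'')^* V_i$ --- a nested induction that works but requires one to notice that rewriting never increases length. You instead treat relations (1)--(3) as a rewriting system whose only obstructions are adjacent subwords $V_s^*V_t$, and you prove termination by exhibiting the inversion count $N(W)$ as a strictly decreasing nonnegative monovariant; the normal form is then automatically $w_1 w_2^*$ (or $0$). Your version makes termination completely transparent and correctly handles the point that a commutation can create \emph{new} adjacent starred--unstarred subwords elsewhere (e.g.\ $V_a^*V_b^*V_c \mapsto V_a^*V_cV_b^*$): what matters is not that no new obstructions appear but that the total inversion count drops, which your bookkeeping verifies. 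The paper's version is shorter to state and produces the decomposition recursively without setting up the monovariant. One shared cosmetic caveat: both you and the paper count the value $0$ as ``of the desired form,'' which is strictly an abuse (no product $w_1w_2^*$ of isometries and co-isometries vanishes), but it is harmless since the lemma is only used to exhibit a dense spanning set, and $0$ lies in any linear span.
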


\begin{proof}
We will use induction on the length of the words. 
The words of length one are $V_i$ and $V_i^*$ and they are of such form. 
Suppose that the statement of the lemma is true for all words of length $m > 1$ and less. 
Take a word $w$ of length $m+1$. 
We have two cases for $w$: \\ 
1) $w=w'V_i^*$ and 2) $w=w' V_i$ for some $1 \leq i \leq n+1$ and some word $w'$ of length $m$. \\ 
By the induction hypothesis $w'$ can be represented as $w' = w_1' (w_2')^*$, where $w_1'$ and $w_2'$ are words in 
letters $\{ V_1, \dots, V_n, V_{n+1} \}$. 
In case 1) $w=w_1'(w_2')^* V_i^*$, so setting $w_1 = w_1'$ and $w_2 = V_i w_2'$ shows that $w$ can be written in 
the desired form. 
For case 2) if the word $w_2'$ is empty then setting $w_1 = w_1' V_i$ and $w_2 = I$ shows that $w$ has the 
desired form. 
If $w_2' = V_j w_2''$ with $w_2''$ a word in letters $\{ V_1, \dots, V_{n+1} \}$ then 
$$w=w_1' (w_2'')^* V_j^* V_i=
      \begin{cases}
      0,& \text{ if } (i,j) \notin \Gamma \\ 
      w_1' (w_2'')^*,& \text{ if } i=j \\ 
      w_1' (w_2'')^* V_i V_j^*,& \text{ if } (i,j) \in \Gamma.
      \end{cases}$$
The first and the second case in the above equation are words of the desired form. 
In the third case we have that
$w_1' (w_2'')^* V_i$ is a word of length $m$ so it can be represented as $\omega_1 \omega_2^*$. 
Then $w_1' (w_2'')^* V_i V_j^* = \omega_1 \omega_2^* V_j^*$ is of the desired form. 
This concludes the induction and proves the lemma.
\end{proof}

Let's denote by $V$ the isometry $V_{n+1} \in C^*_Q(\Gamma)$ and suppose 
without loss of generality that $V^* V_i = 0$ for $k < i \leq n$ (notice that since $\Gamma^{\mathrm{opp}}$ is 
connected, $k < n$). 
If $k > 0$ then also $V$ commutes and $*$-commutes with $V_1, \dots, V_k$. 
\par
Let $T_0 = C^*(V_1, \dots, V_n)$. Then from Theorem \ref{thm:2} it is easy to see that $T_0 \cong C^*(\Gamma')$. 
Define by induction $T_m$ to be the closed linear span of elements of $C^*_Q(\Gamma)$ of the form 
$w V t_{m-1} V^* (w')^*$, where $w, w'$ are words in letters $\{ V_1, \dots, V_n \}$ and $t_{m-1} \in T_{m-1}$. 
The following lemma characterizes the sets $T_m$. 

\begin{lemma} \label{lemma:T}
$T_m$ is a $C^*$-subalgebra of $C^*(\Gamma)$, isomorphic to $\mathcal{K}^{\otimes m} \otimes T_0$ ($\cong
\mathcal{K} \otimes C^*(\Gamma')$). 
\end{lemma}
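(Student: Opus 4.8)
The proof will go by induction on $m$, the case $m=0$ being the definition. Assuming $T_{m-1}\cong\mathcal{K}^{\otimes(m-1)}\otimes T_0$, I plan to produce an isomorphism $T_m\cong\mathcal{K}\otimes T_{m-1}$, from which $T_m\cong\mathcal{K}^{\otimes m}\otimes T_0$ and (since $\mathcal{K}^{\otimes m}\cong\mathcal{K}$) the stated identification follow. The starting observation is that, because $V^*V=I$, the map $\beta(t)=VtV^*$ is an injective $*$-homomorphism of $T_{m-1}$ onto $A:=VT_{m-1}V^*$, for which the projection $P:=VV^*$ acts as a unit, $P\beta(t)=\beta(t)P=\beta(t)$. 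A short companion induction on $m$ using only the definition of $T_{m-1}$ (via $V_w(w'Vt'V^*w''^*)=(V_ww')Vt'V^*w''^*$) shows the bimodule property $V_wT_{m-1}\subseteq T_{m-1}$ and $T_{m-1}V_w^*\subseteq T_{m-1}$ for every word $w$ in $V_1,\dots,V_n$; this is what will let me absorb the neighbours $V_1,\dots,V_k$ of $s$ into the $T_{m-1}$-factor.

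Next I would record the commutation behaviour of $V=V_{n+1}$: for a word $p$ in $V_1,\dots,V_n$ one has $V^*V_p=V_pV^*$ when every letter of $p$ is adjacent to $s$ (relation (2)), and $V^*V_p=0$ otherwise, since sliding $V^*$ rightward past adjacent letters it eventually meets a non-neighbour and annihilates by relation (3). Taking adjoints and combining gives $V^*V_pV_q^*V=V_pV_q^*$ when both $p,q$ are words in $\{V_1,\dots,V_k\}$ and $=0$ in all other cases. I then set $S_x:=V_xV=V_{xs}$ for $x$ in the index set $\mathcal I$ consisting of the empty word together with those elements of $A_{\Gamma'}^+$ admitting no nontrivial right divisor supported on $\{1,\dots,k\}$. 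Each $S_x$ is an isometry ($S_x^*S_x=V^*V_x^*V_xV=I$), and the first key step is the orthogonality relation $S_x^*S_y=\delta_{x,y}I$ for $x,y\in\mathcal I$. Writing $V_x^*V_y=V_pV_q^*$ via the quasi-lattice calculus underlying Lemma \ref{lemmma:words} ($p,q$ the complements to the least common extension of $x,y$, or $V_x^*V_y=0$ if none exists), the formula above collapses $S_x^*S_y=V^*V_x^*V_yV$ to $V_pV_q^*$ exactly when $p,q$ are $\{1,\dots,k\}$-words, and the defining property of $\mathcal I$ forces $p,q$ empty, hence $x=y$, in that surviving case. Since $k<n$ there is at least one non-neighbour of $s$, so $\mathcal I$ is countably infinite and the $S_xS_y^*$ span a copy of $\mathcal K$.

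With this in hand I would define $\Phi\colon\mathcal K(\ell^2\mathcal I)\otimes T_{m-1}\to C^*_Q(\Gamma)$ on matrix units by $\Phi(e_{x,y}\otimes t)=V_x\beta(t)V_y^*=V_xVtV^*V_y^*$. Multiplicativity on the algebraic tensor product follows directly from the orthogonality step, via $\beta(t)V_y^*V_u\beta(t')=Vt\,(V^*V_y^*V_uV)\,t'V^*=\delta_{y,u}\,\beta(tt')$; $\Phi$ is evidently $*$-preserving, hence contractive, and extends to the (unique, $\mathcal K$ being nuclear) $C^*$-tensor product. On the corner $e_{\emptyset,\emptyset}\otimes T_{m-1}$ it restricts to $\beta$, which is injective, and since every ideal of $\mathcal K\otimes T_{m-1}$ is $\mathcal K\otimes J$ this forces $\ker\Phi=0$. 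For surjectivity onto $T_m$ (the second key step), given a generator $wVtV^*w'^*$ I would peel off the maximal right divisor $p$ of $w$ supported on $\{1,\dots,k\}$, writing $w=xp$ with $x\in\mathcal I$; as the letters of $p$ commute with $V,V^*$ and $V_pt\in T_{m-1}$ by the bimodule property, and similarly on the right, one gets $wVtV^*w'^*=V_x\,\beta(V_ptV_{p'}^*)\,V_{x'}^*$ with $x,x'\in\mathcal I$, which lies in the image of $\Phi$. Thus $\Phi$ is a $*$-isomorphism onto $T_m$, so $T_m$ is a $C^*$-subalgebra and $T_m\cong\mathcal K\otimes T_{m-1}$, completing the induction.

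The hard part will be the two purely combinatorial facts about the trace monoid $A_{\Gamma'}^+$ that power the orthogonality and the spanning steps: first, that for distinct $x,y\in\mathcal I$ the complementary words $p,q$ to their least common extension cannot both be supported on the neighbours $\{1,\dots,k\}$ (which is precisely what makes the $S_x$ a genuine orthonormal Cuntz-type family rather than merely a redundant generating set), and second, that every word admits a well-defined maximal right divisor lying in the parabolic submonoid on $\{1,\dots,k\}$, so that the "peeling" of neighbour-letters into the $T_{m-1}$-factor is unambiguous and exhausts $T_m$. Both rest on the cancellation and least-upper-bound structure of $A_{\Gamma'}^+$ together with the dichotomy $V^*V_p\in\{V_pV^*,0\}$; I expect verifying these, rather than the $C^*$-algebraic assembly, to be where the real work lies.
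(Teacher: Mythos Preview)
Your proposal is correct and follows essentially the same strategy as the paper. Your index set $\mathcal{I}$ coincides with the paper's set $\Omega$ (words $\omega$ in $V_1,\dots,V_n$ with no letter that can be commuted past $V$ on the right), and the central computation is the same orthogonality relation $V^*V_x^*V_yV=\delta_{x,y}I$ for $x,y\in\mathcal{I}=\Omega$, which yields the matrix-unit structure and hence $T_m\cong\mathcal{K}\otimes T_{m-1}$.

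The one noteworthy difference is in how the orthogonality is verified. The paper argues by a direct letter-by-letter case analysis of $V^*V_{i_t}^*\cdots V_{i_1}^*V_{j_1}\cdots V_{j_s}V$, tracking three cases (cancellation, annihilation, commutation past everything) and deriving a contradiction with membership in $\Omega$ in the third. Your route is more structural: invoke the quasi-lattice identity $V_x^*V_y=V_pV_q^*$ with $xp=yq$ the least upper bound, observe the dichotomy $V^*V_pV_q^*V\in\{V_pV_q^*,0\}$ according to whether $p,q$ lie in the parabolic submonoid on $\{1,\dots,k\}$, and then use the uniqueness of the factorisation $z=xp$ with $x\in\mathcal{I}$ and $p$ a $\{1,\dots,k\}$-word to force $x=y$. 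This packaging is cleaner and makes transparent that both ``hard'' combinatorial facts you flag are really one statement about parabolic normal forms in the trace monoid; the paper's case analysis establishes the same thing by hand. The surjectivity (``peeling'') step and the assembly into $\mathcal{K}\otimes T_{m-1}$ are handled the same way in both, though you write down the isomorphism $\Phi$ explicitly and argue injectivity via the ideal structure of $\mathcal{K}\otimes T_{m-1}$, whereas the paper exhibits finite truncations $M_l(T_{m-1})$ and passes to the limit.
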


\begin{proof}
Let us denote by $\Omega$ the set of all words $\omega$ in letters $\{ V_1, \dots, V_n \}$ such that the letters
of the word $\omega V$ cannot be commuted pass $V$, i.e. $\omega V = \omega_1 V \omega_2$ for some 
words $\omega_1, \omega_2$ in letters $\{ V_1, \dots, V_n \}$, implies $\omega_2 = I$. 
It is easy to see that from the connectedness of $\Gamma^{opp}$ follows that $\Omega$ is an infinite countable 
set therefore we can enumerate its elements: $\Omega = \{ \omega_0, \omega_1, \omega_2, \dots \}$, setting 
$\omega_0 = I$. 
We assume that the words in $\Omega$ don't repeat, i.e. $\omega_p \neq \omega_q$ for $p \neq q$ after using the 
commutation relation. 
Suppose by induction that $T_{m-1} \cong \mathcal{K}^{\otimes (m-1)} \otimes T_0$ for some $m \geq 1$. 
We want to show that $T_m \cong \mathcal{K} \otimes T_{m-1}$. 
Clearly $\{ \omega_p V t_{m-1} V^* \omega_q^* | p,q \in \mathbb{N}_0 \}$ is a $*$-closed set. 
It is easy to see that each element $w' V t_{m-1} V^* w^*$ ot $T_m$ after applying the commutation relations (2) 
from Theorem \ref{thm:2} can be written in the form $\omega_p V t_{m-1}' V^* \omega_q^*$ for some 
$p,q \in \mathbb{N}_0$ and some $t_{m-1}' \in T_{m-1}$. 
Therefore $\{ \omega_p V t_{m-1} V^* \omega_q^* | p,q \in \mathbb{N}_0,\ t_{m-1} \in T_{m-1} \}$ spans a dense 
subset of $T_m$. 
We conclude that $T_m$ is $*$-closed. 
\par
We want to show now that $V^* \omega_q ^* \omega_p V = \delta_{p,q} I$. 
Write $\omega_p = V_{j_1} \cdots V_{j_s}$ and $\omega_q = V_{i_1} \cdots V_{i_t}$. 
Then $V^* \omega_q^* \omega_p V = V^* V_{i_t}^* \cdots V_{i_2}^* V_{i_1}^* V_{j_1} V_{j_2} \cdots V_{j_s} V$. 
There are three cases: \\ 
1) If $V_{j_1}$ commutes with $V_{i_1}^*, \dots, V_{i_r}^*$ ($1 \leq r < t$) and $i_{r+1} = j_1$ then 
$V_{i_{r+1}}^*$ will commute with $V_{i_1}^*, \dots, V_{i_r}^*$, so the word $\omega_q$ can be written in the 
form $\omega_q = V_{i_1} V_{i_2} \cdots V_{i_t}$ with $i_1 = j_1$. 
Then we can write $V^* \omega_q ^* \omega_p V = V^* V_{i_t}^* \cdots V_{i_2}^* V_{j_2} \cdots V_{j_s} V$ and 
continue the argument with this word.\\ 
2) If $V_{j_1}$ commutes with $V_{i_1}^*, \dots, V_{i_r}^*$ ($1 \leq r < t$) and $(j_1, i_{r+1}) \notin \Gamma$,
then $V^* \omega_q ^* \omega_p V = 0$. Also if $j_1 > k$ and $V_{j_1}$ commutes with $V_{i_1}^*, \dots,
V_{i_t}^*$ we also have $V^* \omega_q ^* \omega_p V = 0$.\\ 
3) If $V_{j_1}$ commutes with $V_{i_1}, \dots, V_{i_t}$ and $V$ clearly then $j_1 \leq k$ and from the definition 
of $\Omega$ follows that $V_{j_1}$ doesn't commute with all $V_{j_2}, \dots, V_{j_s}$. 
Suppose that $V_{j_1}$ doesn't commute with $V_{j_r}$ ($2 \leq r \leq s$) and if $r > 2$ $V_{j_1}$ commutes with 
$V_{j_2}, \dots, V_{j_{r-1}}$. 
Notice that $j_r \notin \{ i_1, \dots, i_t \}$ since $V_{j_1}$ commutes with $V_{i_1}^*, \dots, V_{i_t}^*$ and 
not with $V_{j_r}$. 
\par 
Suppose that $V^* \omega_q^* \omega_p V \neq 0$. 
Then suppose that $V_{j_1}, \dots, V_{j_{r_1}}$ can be dealt with by using repeatedly case 1). 
If $r_1 = s = t$ then $V^* \omega_q ^* \omega_p V = \delta_{p,q} I$ is proven. 
If $r_1 = s < t$ then $V^* \omega_q^* \omega_p V$ reduces to $V^* V_{i_t}^* \cdots V_{i_{s+1}}^* V$. 
If $i_t \leq k$ then $V_{i_t}^*$ would commute with $V^*$ contradicting the fact that $\omega_q \in \Omega$. 
$i_t > k$ implies immediatelly $V^* V_{i_t}^* \cdots V_{i_{s+1}}^* V = 0$ because $V$ does not commute with all 
of $V_{i_t}^*, \dots, V_{i_{s+1}}^*$ so it has a orthogonal range with some of them. 
The case $r_1=t<s$ is similar. 
If $r_1 < s$ and $r_1 < t$ then suppose that for $V_{j_{r_1+1}}$ case 3) applies. 
We will obtain a contradiction with the fact that $\omega_p \in \Omega$. 
By case 3) we can find $r_2 > r_1+1$ such that $V_{j_{r_1+1}}$ doesn't commute with
$V_{j_{r_2}}$ and if $r_2 > r_1+2$ then $V_{j_{r_1+1}}$ commutes with $V_{j_{r_1+2}}, \dots, V_{j_{r_2-1}}$. 
Also $j_{r_2} \notin \{ i_{r_1+1}, \dots, i_t \}$ ($V_{j_{r_1+1}}$ commutes with 
$V_{i_{r_1+1}}^*, \dots, V_{i_t}^*$ and not with $V_{j_{r_2}}$) and so case 1) cannot be applied to $V_{j_{r_2}}$. 
We can repeat this process finitely many times until we reach the isometry $V_{j_s}$ for which case 3) must apply 
since case 1) cannot be applied as we saw above and case 2) cannot be applied by assumption. 
But then $j_s \leq k$ and $V_{j_s}$ commutes with $V$ which contradicts $\omega_p \in \Omega$. 
This proves $V^* \omega_q ^* \omega_p V = \delta_{p,q} I$.
\par
It follows that $\omega_p V t_{m-1} V^* \omega_q^* \omega_{p'} V t_{m-1}' V^* \omega_{q'} = \delta_{p',q} 
\omega_p V t_{m-1} t_{m-1}' V^* \omega_q'$ and thus $T_m$ is a $C^*$-algebra. 
The equation $V^* \omega_q ^* \omega_p V = \delta_{p,q} I$ implies that 
$C^*( \{ \omega_p V V^* \omega_q^* | 0 \leq p,q \leq l-1 \} ) \cong M_l(\mathbb{C})$. 
It is clear that $V T_{m-1} V^*$ is a $C^*$-algebra, isomorphic to $T_{m-1}$. 
Therefore
\begin{multline*}
C^*(\{ \omega_p V t_{m-1} V^* \omega_q^* | 0 \leq p,q \leq l-1, t_{m-1} \in T_{m-1} \}) \cong \\ 
C^*(\{ \underset{i=0}{\overset{l-1}{\sum}} (\omega_i V t_{m-1} V^* \omega_i^*) | t_{m-1} \in T_{m-1} \}) 
\otimes C^*( \{ \omega_p V V^* \omega_q^* | 0 \leq p,q \leq l-1 \} ) \\ 
\cong T_{m-1} \otimes M_l(\mathbb{C}) = M_l(T_{m-1}), 
\end{multline*}
since $\underset{i=0}{\overset{l-1}{\sum}} (\omega_i V t_{m-1} V^* \omega_i^*)$ commutes with 
$\omega_p V V^* \omega_q^*$ for each $0 \leq p,q \leq l-1$ and each 
$t_{m-1} \in T_{m-1}$. 
Taking limit $l \to \infty$ concludes the proof of the lemma.
\end{proof}

From the proof of this lemma easily follows that $T_m$ is the closed linear span of 
$$\{ \omega_{p_m} V \cdots V \omega_{p_1} V t_0 V^* \omega_{q_1}^* V^* \cdots V^* \omega_{q_m}^* |\ \omega_{p_1}, 
\dots, \omega_{p_m}, \omega_{q_1}, \dots, \omega_{q_m} \in \Omega,\ t_0 \in T_0 \}.$$
This implies that $T_m \cdot T_l \subset T_m$ and $T_l \cdot T_m \subset T_m$ for each $m \geq l \geq 0$.  
\par
Now we introduce the following $C^*$-subalgebras of $C^*_Q(\Gamma)$: 
Define $B_0 = T_0$ and $B_m = C^*(B_{m-1} \cup T_m) = C^*(T_0 \cup \dots \cup T_m)$. 
From what we said above is clear that $T_m$ is an ideal of $B_m$. 
Therefore we have an extension 
\begin{equation} \label{equ:1}
0 \longrightarrow T_m \overset{i_m}{\longrightarrow} B_m \overset{p_m}{\longrightarrow} B_m/T_m \longrightarrow 
0,
\end{equation}
where $i_m : T_m \to B_m$ is the inclusion map and $p_m : B_m \to B_m/T_m$ is the quotient map.
\par
From \cite[Theorem 3.1.7]{M90} (or \cite[Corollary II.5.1.3]{B06}) follows that $B_m = B_{m-1} + T_m$ as a linear 
space. 
From \cite[Remark 3.1.3]{M90} follows that the map $\pi_m : B_{m-1}/(B_{m-1} \cap T_m) \to B_m/T_m$ given by 
$b_{m-1} + B_{m-1} \cap T_m \mapsto b_{m-1} + T_m$ is an isomorphism ($b_{m-1} \in B_{m-1}$). 
\par
Define $\mathcal{I}_m \overset{def}{=} \langle V^m [\underset{i=1}{\overset{n}{\prod}} (I - V_i V_i^*)] (V^*)^m
\rangle_{T_m}$. 
Since $T_0 \cong C^*(\Gamma')$ from Theorem \ref{thm:2} follows that $\mathcal{I}_0$ is the
unique nontrivial ideal of $T_0$ and it is isomorphic to $\mathcal{K}$. 
Then from Lemma \ref{lemma:T} follows
that $\mathcal{I}_m$ is the unique nontrivial ideal of $T_m$ and it is isomorphic to $\mathcal{K}^{\otimes m}
\otimes \mathcal{K}$. The ideal $\mathcal{I}_m$ can be described as the closed linear span of 
$$\{ \omega_{p_m} V \cdots V \omega_{p_1} V \iota_0 V^* \omega_{q_1}^* 
V^* \cdots V^* \omega_{q_m}^* |\ \omega_{p_1}, \dots, \omega_{p_m}, \omega_{q_1}, \dots, \omega_{q_m} \in \Omega,
\iota_0 \in \mathcal{I}_0 \}.$$
Therefore it is easy to see that $V^m (V^*)^m \mathcal{I}_m V^m (V^*)^m = V^m \mathcal{I}_0 (V^*)^m$. 
\par
By the definition of $C^*_Q(\Gamma)$ we have 
$(I - VV^*) \underset{i=1}{\overset{n}{\prod}} (I - V_i V_i^*) = 0$ or $\underset{i=1}{\overset{n}{\prod}} 
(I - V_i V_i^*) = VV^* \underset{i=1}{\overset{n}{\prod}} (I - V_i V_i^*)$. 
Therefore using relations (2) and (3) from Theorem \ref{thm:1} we get 
\begin{multline*}
\underset{i=1}{\overset{n}{\prod}} (I - V_i V_i^*) = VV^* \underset{i=1}{\overset{n}{\prod}} 
(I - V_i V_i^*) = V \underset{i=1}{\overset{k}{\prod}} (I - V_i V_i^*) V^* \underset{i=k+1}{\overset{n}{\prod}} 
(I - V_i V_i^*) = \\ 
= V \underset{i=1}{\overset{k}{\prod}} (I - V_i V_i^*) V^* \in T_1. 
\end{multline*}
It follows also that $V^m V \underset{i=1}{\overset{k}{\prod}} (I - V_i V_i^*) V^* (V^*)^m \in V^m T_1 (V^*)^m
\subset T_{m+1}$. 
It is easy to see that $T_{m+1} \cdot B_m \subset T_{m+1}$ and $B_m \cdot T_{m+1} \subset T_{m+1}$. 
This implies that $T_m \cap T_{m+1}$ is an ideal of $T_m$ and that $B_m 
\cap T_{m+1}$ is an ideal of $B_m$. 
From this we can conclude that $\mathcal{I}_m \subset (T_m \cap T_{m+1})$ for each $m \in \mathbb{N}$. 
The reverse inclusion is also true:

\begin{lemma} \label{lemma:3} 
$B_m \cap T_{m+1} = \mathcal{I}_m$ for each $m \in \mathbb{N}_0$.
\end{lemma}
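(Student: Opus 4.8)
The plan is to prove the nontrivial inclusion $B_m\cap T_{m+1}\subseteq\mathcal I_m$, the reverse $\mathcal I_m\subseteq B_m\cap T_{m+1}$ being already established. Everything rests on the uniqueness of the nontrivial ideal of $T_m$: by Lemma \ref{lemma:T}, $T_m\cong\mathcal K^{\otimes m}\otimes C^*(\Gamma')$, and since $(\Gamma')^{\mathrm{opp}}$ is connected, Theorem \ref{thm:2} shows $C^*(\Gamma')$ has exactly one nontrivial ideal $\mathcal I_0\cong\mathcal K$; hence the only ideals of $T_m$ are $0$, $\mathcal I_m$, and $T_m$. I would therefore show that $B_m\cap T_{m+1}$ is an ideal of $T_m$ strictly contained in $T_m$; since it contains $\mathcal I_m\neq 0$, it then equals $\mathcal I_m$. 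This splits into the containment $B_m\cap T_{m+1}\subseteq T_m$ and the properness $T_m\not\subseteq T_{m+1}$.

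First I would dispose of the auxiliary map and the easy points. Since $V^*V=I$, the $*$-homomorphism $\alpha=\Ad(V)$ satisfies $\alpha(T_j)\subseteq T_{j+1}$, while a direct computation using $V^*V_i=V_iV^*$ for $i\le k$ and $V^*V_i=0$ for $i>k$ shows that the contraction $\psi=\Ad(V^*)$, $\psi(x)=V^*xV$, sends each generator $wVt_jV^*w'^*$ of $T_{j+1}$ to $wt_jw'^*\in T_j$ or to $0$; hence $\psi(T_{j+1})\subseteq T_j$ for all $j\ge 0$. For properness, the projection $q_m=V^m(V^*)^m=\alpha^{m-1}(VV^*)$ lies in $T_m$ (as $VV^*\in T_1$), and if $T_m\subseteq T_{m+1}$ then $q_m\in T_{m+1}$, so $\psi^m(q_m)=(V^*)^mV^m(V^*)^mV^m=I$ would lie in $\psi^m(T_{m+1})\subseteq T_1$. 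But Lemma \ref{lemma:T} gives $T_1\cong\mathcal K\otimes C^*(\Gamma')$, which is non-unital, so $I\notin T_1$; this contradiction proves $T_m\not\subseteq T_{m+1}$. In particular the same reasoning already yields the clean sub-statement $T_m\cap T_{m+1}=\mathcal I_m$, since $T_m\cap T_{m+1}$ is visibly an ideal of $T_m$ containing $\mathcal I_m$ and, being proper, must equal $\mathcal I_m$.

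It remains to prove the containment $B_m\cap T_{m+1}\subseteq T_m$, which I expect to be the main obstacle; once it is in hand, combining $B_m\cap T_{m+1}\subseteq T_m$ with the trivial $B_m\cap T_{m+1}\subseteq T_{m+1}$ gives $B_m\cap T_{m+1}\subseteq T_m\cap T_{m+1}=\mathcal I_m$ and finishes the proof. I would argue by induction on $m$, the case $m=0$ being trivial because $B_0=T_0$. For the inductive step I would use the linear decomposition $B_m=B_{m-1}+T_m$: writing $x\in B_m\cap T_{m+1}$ as $x=b+t$ with $b\in B_{m-1}$ and $t\in T_m$, it suffices to force $b\in B_{m-1}\cap T_m=\mathcal I_{m-1}$ (the induction hypothesis), since $\mathcal I_{m-1}\subseteq T_m$ then gives $x\in T_m$. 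Thus the heart of the matter is to show that the $B_{m-1}$-component of an element of $T_{m+1}$ cannot escape $\mathcal I_{m-1}$.

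Here is where I expect the real work. The difficulty is that $\psi=\Ad(V^*)$ is not multiplicative, so I cannot simply transport the relation down a level, and the two natural gauge actions (scaling $V_{n+1}$, or all $V_s$) leave every $T_j$ invariant, so they do not separate the levels. My plan is to organize the needed linear independence through the faithful Fock representation $\pi_\Gamma$ on $H_\Gamma$, graded by the number of occurrences of the top letter $V_{n+1}$ in a reduced basis word $\mathfrak E[s_1,\dots,s_r]$; this weight is preserved by the commutation relation $\sim$, and $V_{n+1}$ raises it by one while $V_1,\dots,V_n$ preserve it. Each $T_j$ then acts by weight-preserving operators that annihilate all basis vectors of weight $<j$, so $T_{m+1}$ kills weights $0,\dots,m$ whereas the part of $B_{m-1}$ with nonzero image in $B_{m-1}/\mathcal I_{m-1}$ acts nontrivially on low weights. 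Tracking how the defining relation $\prod_{i=1}^n(I-V_iV_i^*)=V\prod_{i=1}^k(I-V_iV_i^*)V^*$ is the only mechanism permitting a depth-$(m+1)$ expression to be rewritten at depth $m$, I would conclude that the discrepancy forces $b$ into $\mathcal I_{m-1}$. Making this last step fully rigorous, namely reconciling the abstract ideal $\mathcal I_m\cong\mathcal K$ sitting inside $C^*_Q(\Gamma)$ with the compacts $\mathcal K(H_\Gamma)$ that are divided out in passing from $C^*(\Gamma)$ to $C^*_Q(\Gamma)$, is the delicate point and is where I would spend the bulk of the effort.
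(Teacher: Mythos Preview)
Your reduction to ``the only ideals of $T_m$ are $0$, $\mathcal I_m$, $T_m$'' and the ensuing proof that $T_m\cap T_{m+1}=\mathcal I_m$ is correct and is exactly how the paper argues (compress by $V^m(V^*)^m$ to reduce to $T_0\cap T_1$, then invoke non-unitality of $T_1\cong\mathcal K\otimes C^*(\Gamma')$).

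The divergence is in the remaining step, and there your proposal has a genuine gap. You want $B_m\cap T_{m+1}\subseteq T_m$ via the Fock model, grading $H_\Gamma$ by the number of occurrences of the letter $n{+}1$. The obstacle you flag is not merely delicate: $\pi_\Gamma$ realizes $C^*(\Gamma)$ faithfully, but the algebras $T_j$, $B_m$, $\mathcal I_m$ live in the quotient $C^*_Q(\Gamma)$, and the defining relation $\prod_{i=1}^n(I-V_iV_i^*)=V\bigl(\prod_{i=1}^k(I-V_iV_i^*)\bigr)V^*$, which holds only after the quotient, is precisely the mechanism that moves elements between weight-levels. If you lift to $C^*(\Gamma)$ the weight argument goes through for the lifted $\tilde T_j$, but then $\tilde{\mathcal I}_{m-1}\not\subseteq\tilde T_m$ and your inductive hypothesis $B_{m-1}\cap T_m=\mathcal I_{m-1}$ has no analogue upstairs; if you stay in $C^*_Q(\Gamma)$ the grading is gone. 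I do not see how to close this loop along the lines you sketch.

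The paper avoids the representation entirely and instead proves, inside $C^*_Q(\Gamma)$, the sharper fact $T_{m+1}\cap T_j=0$ for every $0\le j<m$. If this intersection were nonzero it would contain the minimal ideal $\mathcal I_j$, hence $V^{j}\prod_{i=1}^n(I-V_iV_i^*)(V^*)^{j}\in T_{m+1}$; compressing by $V^{j+1}(V^*)^{j+1}$ forces $Q:=\prod_{i=1}^k(I-V_iV_i^*)\in T_{m-j}$. The decisive new ingredient is that $Q\notin\mathcal I_0$: push $Q$ through the quotient map $Q_{\Gamma'}\colon T_0=C^*(\Gamma')\to C^*_Q(\Gamma')$, use connectedness of $(\Gamma')^{\mathrm{opp}}$ to pick some $l$ with $k<l\le n$ not adjacent to some $j\le k$, and conjugate by $V_l$ to strip one factor from $Q$; iterating yields $Q_{\Gamma'}(I)=0$, a contradiction. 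Thus $T_0\cap T_{m-j}$ is an ideal of $T_0$ containing an element outside $\mathcal I_0$, so $T_0\subseteq T_{m-j}$, contradicting non-unitality. This purely algebraic use of the specific projection $Q$ together with $Q_{\Gamma'}$ is what replaces your Fock-space weight count, and it is the idea your outline is missing.
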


\begin{proof}
Since $\mathcal{I}_0$ is the unique nontrivial ideal of $T_0$ and since $T_0 \cap T_1$ is an ideal of $T_0$, 
then if we assume that $\mathcal{I}_0 \subsetneq T_0 \cap T_1$ it will follow that 
$T_0 = T_0 \cap T_1$. 
Then $I = 1_{T_0} = 1_{C^*_Q(\Gamma)} \in T_0 \subset T_1$. 
This will imply that 
$T_1 \cong \mathcal{K} \otimes T_0$ is a unital $C^*$-algebra which is a contradiction. 
Therefore $\mathcal{I}_0 = T_0 \cap T_1$.
\par
It is easy to see that for each $m \in \mathbb{N}$ we have $V^m (V^*)^m T_m V^m (V^*)^m = V^m T_0 (V^*)^m \cong
T_0$ and that $V^m (V^*)^m T_{m+1} V^m (V^*)^m = V^m T_1 (V^*)^m \cong T_1$. 
Thus if we assume that $T_m = T_m \cap T_{m+1}$ it will follow that $V^m T_0 (V^*)^m \subset V^m T_1 (V^*)^m$ 
and therefore that $T_0 \subset T_1$. 
This is a contradiction with what we proved in the last paragraph. 
Therefore $T_m \cap T_{m+1} \subsetneq T_m$ and thus $T_m \cap T_{m+1} = \mathcal{I}_m$. 
\par
To conclude the proof of the lemma we have to show that $T_{m+1} \cap T_j = 0$ for each $0 \leq j < m$. 
In this case we have once again that $T_{m+1} \cap T_j$ is an ideal of $T_j$. 
Therefore the assumption $T_{m+1} \cap T_j \neq 0$ implies that $T_{m+1}$ contains the minimal nonzero ideal of 
$T_j$, $\mathcal{I}_j$. 
In particular $V^j
\underset{i=1}{\overset{n}{\prod}} (I - V_i V_i^*) (V^*)^j = V^{j+1} \underset{i=1}{\overset{k}{\prod}} (I - V_i
V_i^*) (V^*)^{j+1} \in T_{m+1}.$ 
This implies 
\begin{multline*} 
V^{j+1} \underset{i=1}{\overset{k}{\prod}} (I - V_i V_i^*) (V^*)^{j+1} = V^{j+1} (V^*)^{j+1} V^{j+1} 
\underset{i=1}{\overset{k}{\prod}} (I - V_i V_i^*) (V^*)^{j+1} 
V^{j+1} (V^*)^{j+1} \\ 
\in V^{j+1} (V^*)^{j+1} T_{m+1} V^{j+1} (V^*)^{j+1} = V^{j+1} T_{m-j} (V^*)^{j+1}. 
\end{multline*}
Therefore $\underset{i=1}{\overset{k}{\prod}} (I - V_i V_i^*) \in T_{m-j}$. 
Since also
$\underset{i=1}{\overset{k}{\prod}} (I - V_i V_i^*) \in T_0$, then the ideal $T_0 \cap T_{m-j}$ of $T_0$ contains
$\underset{i=1}{\overset{k}{\prod}} (I - V_i V_i^*)$. 
We will show that $\underset{i=1}{\overset{k}{\prod}} (I - V_i V_i^*) \notin \mathcal{I}_0$ this will imply that 
$T_0 \subset T_{m-j}$ for $m-j > 0$ and therefore obtaining a contradiction with the fact that $T_{m-j}$ is not 
unital for $m-j > 0$. 
\par
Suppose that $\underset{i=1}{\overset{k}{\prod}} (I - V_i V_i^*) \in \mathcal{I}_0$. Then since 
$T_0 = C^*(\Gamma')$ we have $Q_{\Gamma'}
(\underset{i=1}{\overset{k}{\prod}} (I - V_i V_i^*)) = 0$. 
From the connectedness of $(\Gamma')^{\mathrm{opp}}$ follows that we can find $j$, $1 \leq j \leq k$ and $l$, 
$k < l \leq n$ with $(j, l) \notin \Gamma'$. 
Then
\begin{multline*}
0 = Q_{\Gamma'}(V_l^*) Q_{\Gamma'}(\underset{i=1}{\overset{k}{\prod}} (I - V_i V_i^*)) Q_{\Gamma'}(V_l) = 
Q_{\Gamma'}(V_l^*) Q_{\Gamma'}(\underset{1 \leq i \leq k}{\underset{(i,l) \in \Gamma'}{\prod}} (I - V_i V_i^*))
Q_{\Gamma'}(V_l) = \\ 
= Q_{\Gamma'}(V_l^* V_l) Q_{\Gamma'}(\underset{1 \leq i \leq k}{\underset{(i,l) \in
\Gamma'}{\prod}} (I - V_i V_i^*)) = Q_{\Gamma'}(\underset{1 \leq i \leq k}{\underset{(i,l) \in
\Gamma'}{\prod}} (I - V_i V_i^*)). 
\end{multline*}
By repeating this argument finitely many times we will arrive at the equality $Q_{\Gamma'}(I) = 0$ which is a
contradiction. 
Therefore $\underset{i=1}{\overset{k}{\prod}} (I - V_i V_i^*) \notin \mathcal{I}_0$. 
This completes the proof of the lemma. 
\end{proof}

This lemma shows that we have an extension
\begin{equation} \label{equ:2}
0 \rightarrow \mathcal{I}_{m-1} \overset{i_m'}{\rightarrow} B_{m-1} \overset{p_m'}{\rightarrow}
B_{m-1}/ \mathcal{I}_{m-1} \rightarrow 0,
\end{equation}
where $i_m' : \mathcal{I}_{m-1} \to B_{m-1}$ is the inclusion map and $p_m' : B_{m-1} \to B_{m-1}/ 
\mathcal{I}_{m-1}$ is the quotient map.
\par
From equations (\ref{equ:1}) and (\ref{equ:2}) we have the commutative diagram with exact rows:
\begin{equation} \label{equ:3}
 \begin{CD}
    0 @>>> \mathcal{I}_{m-1} @>{i_m'}>> B_{m-1} @>{p_m'}>> B_{m-1}/\mathcal{I}_{m-1} @>>> 0\\
           &&    @V{I_m'}VV          @V{I_m}VV             @V{\cong}V{\pi_m}V      &&  \\
    0 @>>> T_m @>{i_m}>> B_m @>{p_m}>> B_m/T_m @>>> 0,
 \end{CD}
\end{equation}
where $I_m' : \mathcal{I}_{m-1} \to T_m$ and $I_m : B_{m-1} \to B_m$ are the inclusion maps. 
\par
Define $B \overset{def}{=} \overline{\underset{i=0}{\overset{\infty}{\bigcup}} B_i}^{\| . \|} \subset 
C^*(\Gamma)$ or in other words $B \overset{def}{=} \varinjlim (B_m, I_m)$. 
Notice that if $t_m \in T_m$ then $V t_m V^* \in T_{m+1}$. 
Thus we have a well defined injective endomorphism $\beta : B \to B$ given by $b \mapsto V b V^*$.
\par
Similarly to the Cuntz' construction from \cite{C77} we define $\tilde{B} \overset{def}{=} \varinjlim (B^m,
\alpha_m)$ as the limit of the sequence (which is also a commutative diagram) 
\begin{equation} \label{equ:4}
\begin{CD}
\dots @>{\alpha_{-m-1}}>> B^{-m} @>{\alpha_{-m}}>> \dots @>{\alpha_{-1}}>> B^0 @>{\alpha_0}>> B^1 @>{\alpha_1}>> 
\dots @>{\alpha_{m-1}}>> B^m @>{\alpha_{m}}>> \dots \\
& & @V{j_{-m}}V{\cong}V & & @V{j_0}V{\cong}V  @V{j_1}V{\cong}V & & @V{j_m}V{\cong}V  \\
\dots @>{\beta}>> B @>{\beta}>> \dots @>{\beta}>> B @>{\beta}>> B @>{\beta}>> \dots @>{\beta}>> B @>{\beta}>>
\dots, \\
\end{CD}
\end{equation} 
where $j_m : B^m \to B$ are $*$-isomorphisms. 
Since $\tilde{B}$ is a limit $C^*$-algebra we have $*$-homomorphisms $\alpha^m : B^m \rightarrow \tilde{B}$, 
s.t. $\alpha^m = \alpha^{m+1} \circ \alpha_m$ for all $m \in \mathbb{Z}$.
\par
Now we define a $*$-homorphism $\Phi$ of $\tilde{B}$ to itself, which is induced by "shift to the left" on
(\ref{equ:4}). 
In other words if we have a stabilizing sequence $(b^m)_{m= - \infty}^{ + \infty}$, where 
$b^m \in B^m$ for each $m$, then $\Phi((b^m)_{m= - \infty}^{ + \infty}) = (j_m^{-1} 
\circ j_{m+1}(b^{m+1}))_{m= - \infty}^{ + \infty}$. 
In particular for $b \in B$ the element $\alpha^m \circ j_m^{-1}(b)$ can be represented as the sequence 
$(0, \dots, 0, 0, j_m^{-1}(b), \alpha_m \circ j_m^{-1}(b), \alpha_{m+1} \circ \alpha_m \circ j_m^{-1}(b), 
\dots ) = (0, \dots, 0, 0, j_m^{-1}(b), j_{m+1}^{-1} \circ \beta(b), j_{m+2}^{-1} \circ \beta^2(b), \dots)$ 
therefore $\Phi(\alpha^m \circ j_m^{-1} (b))$ can be represented as the sequence $(0, \dots, 0, j_{m-1}^{-1}(b), 
j_{m}^{-1} \circ \beta(b), j_{m+1}^{-1} \circ \beta^2(b), \dots)$. 
This shows that $\Phi(\alpha^m \circ j_m^{-1} (b)) = \alpha^m \circ j_m^{-1} \circ \beta(b)$. 
The extension of this map to the whole of $\tilde{B}$ (we call it $\Phi$ also) is a $*$-isomorphism, because
$\Phi$ is isometric on the dense set of all stabilizing sequences (since $j_m$ are all isomorphisms). 
Now let $\tilde{A}$ be the crossed product of $\tilde{B}$ by the automorphism $\Phi$. 
We represent $\tilde{A}$ faithfully on a Hilbert space $\mathfrak{H}$ so that $\Phi$ is implemented by a unitary 
$U$ on $\mathfrak{H}$: $\Phi(b) = U b U^*$ for $b \in \tilde{B}$. Then $\tilde{A} = C^*(\tilde{B} \cup \{ U \})$. 
Every element of $\tilde{A}$ is a limit of elements of the form 
$\tilde{a} = \underset{i=-N}{\overset{N}{\sum}} b_iU^i = \underset{i=-N}{\overset{-1}{\sum}} U^i 
\bar{b_i} + b_0 + \underset{i=1}{\overset{N}{\sum}} b_i U^i$, with $b_i \in \tilde{B}$, where $\bar{b_i} = U^{-i} 
b_i U^i \in \tilde{B}$ for $i = -N, ..., -1$. 
Therefore the set of the elements of $\tilde{A}$ of the above form is dense in $\tilde{A}$. 
\par
Set $\tilde{P}_m \overset{def}{=} \alpha^m(1_{B^m}) \in \tilde{B}$ for each $m \in \mathbb{Z}$. 
Notice that $\alpha^m(1_{B^m}) = \alpha^m \circ j_m^{-1}(I) =  \alpha^{m+1} \circ \alpha_m \circ j_m^{-1}(I) = 
\alpha^{m+1} \circ j_{m+1}^{-1}(\beta(I))$. 
By induction 
$$\tilde{P}_m = \alpha^{m+i} \circ j_{m+i}^{-1}(\beta^i(I)),\ m \in \mathbb{Z},\ i \in \mathbb{N}.$$
Therefore we can write 
\begin{equation} \label{P}
\tilde{P}_m = \Phi^{-m}(\tilde{P}_0),\ m \in \mathbb{Z}.
\end{equation}

Consider the $C^*$-algebra $\tilde{P}_0 \tilde{A} \tilde{P}_0$. 
Clearly  $\tilde{P}_0 \tilde{B} \tilde{P}_0 \subset \tilde{P}_0 \tilde{A} \tilde{P}_0$. 
Since elements of the form $\tilde{a} = \underset{i=-N}{\overset{-1}{\sum}} U^i b_i + b_0 + 
\underset{i=1}{\overset{N}{\sum}} b_i U^i$ ($b_i \in \tilde{B}$) are dense in $\tilde{A}$, then elements of the 
form $\tilde{P}_0 \tilde{a} \tilde{P}_0 = \underset{i=-N}{\overset{-1}{\sum}} \tilde{P}_0 U^i b_i \tilde{P}_0 + 
\tilde{P}_0 b_0 \tilde{P}_0 + \underset{i=1}{\overset{N}{\sum}} 
\tilde{P}_0 b_i U^i \tilde{P}_0$ are dense in $\tilde{P}_0 \tilde{A} \tilde{P}_0$. 
It is easy to see that $U \tilde{P}_0 U^* = \Phi(\tilde{P}_0) < \tilde{P}_0$, so the range of $U\tilde{P}_0$ is 
contained in $\tilde{P}_0$ and therefore $\tilde{P}_0 U \tilde{P}_0 = U \tilde{P}_0$. 
Then 
\begin{multline*}
\tilde{P}_0 \tilde{a} \tilde{P}_0 =  \underset{i=-N}{\overset{-1}{\sum}} \tilde{P}_0 U^i b_i \tilde{P}_0 + 
\tilde{P}_0 b_0 \tilde{P}_0 + 
\underset{i=1}{\overset{N}{\sum}} \tilde{P}_0 b_i U^i \tilde{P}_0 = \\ 
 = \underset{i=-N}{\overset{-1}{\sum}} (\tilde{P}_0 U^i) (\tilde{P}_0 b_i \tilde{P}_0) + \tilde{P}_0 b_0 
\tilde{P}_0 + \underset{i=1}{\overset{N}{\sum}} (\tilde{P}_0 b_i \tilde{P}_0) (U^i \tilde{P}_0). 
\end{multline*}

This shows that if we set $S \overset{def}{=} U \tilde{P}_0$ then $\tilde{P}_0 \tilde{A} \tilde{P}_0 = 
C^*( \tilde{P}_0 \tilde{B} \tilde{P}_0 \cup \{ S \} )$. 
Let us also set $S_i \overset{def}{=} \alpha^0(j_0^{-1}(V_i)),\ i = 1, ..., n$. 
\par
It is easy to see that $\Span( \underset{l=0}{\overset{\infty}{\bigcup}} T_l )$ is dense in $B$. 
Then it follows that $\Span( \underset{i=0}{\overset{\infty}{\bigcup}} \alpha^i \circ j_i^{-1}
(\underset{l=0}{\overset{\infty}{\bigcup}} T_l) )$ is dense in $\tilde{B}$. 
Therefore $\tilde{P}_0 \Span( \underset{i=0}{\overset{\infty}{\bigcup}} \alpha^i \circ j_i^{-1}
(\underset{l=0}{\overset{\infty}{\bigcup}} T_l) ) \tilde{P}_0 = \Span( \tilde{P}_0 
\underset{i=0}{\overset{\infty}{\bigcup}} \alpha^i \circ j_i^{-1}
(\underset{l=0}{\overset{\infty}{\bigcup}} T_l) \tilde{P}_0 )$ is dense in $\tilde{P}_0  \tilde{B} \tilde{P}_0$.
For each $i \in \mathbb{N}$ we have
\begin{multline*} 
\tilde{P}_0 \alpha^i \circ j_i^{-1}(\underset{l=0}{\overset{\infty}{\bigcup}} T_l) \tilde{P}_0 = 
\alpha^i \circ j_i^{-1}(\beta^i(I)) \alpha^i \circ j_i^{-1}(\underset{l=0}{\overset{\infty}{\bigcup}} T_l)
\alpha^i \circ j_i^{-1}(\beta^i(I)) = \\ 
= \alpha^i \circ j_i^{-1}(\beta^i(I)
(\underset{l=0}{\overset{\infty}{\bigcup}} T_l) \beta^i(I)) = \alpha^i \circ j_i^{-1}(V^i (V^*)^i 
(\underset{l=0}{\overset{\infty}{\bigcup}} T_l) V^i (V^*)^i) = \\ 
= \alpha^i \circ j_i^{-1}((V^i (V^*)^i)^2 
(\underset{l=0}{\overset{\infty}{\bigcup}} T_l) (V^i (V^*)^i)^2) \subset 
\alpha^i \circ j_i^{-1}((V^i (V^*)^i T_i)
(\underset{l=0}{\overset{\infty}{\bigcup}} T_l) (T_i V^i (V^*)^i)) \subset \\ 
\subset \alpha^i \circ j_i^{-1}(V^i (V^*)^i 
(\underset{l=i}{\overset{\infty}{\bigcup}} T_l) V^i (V^*)^i) = \alpha^i \circ j_i^{-1}(V^i 
(\underset{l=0}{\overset{\infty}{\bigcup}}
T_l) (V^*)^i) = \alpha^i \circ j_i^{-1}(\beta^i(\underset{l=0}{\overset{\infty}{\bigcup}} T_l))) = \\ 
= \alpha^i \circ \alpha_{i-1} \circ \alpha_{i-2} \circ \dots \circ \alpha_1 \circ \alpha_0 \circ 
j_0^{-1}(\underset{l=0}{\overset{\infty}{\bigcup}} T_l) = \alpha^0 \circ
j_0^{-1}(\underset{l=0}{\overset{\infty}{\bigcup}} T_l). 
\end{multline*}
From this it follows that $\alpha_0 \circ j_0^{-1}(\Span(\underset{l=0}{\overset{\infty}{\bigcup}} T_l))$ is
dense in $\tilde{P}_0 \tilde{B} \tilde{P}_0$ and therefore also that $\alpha^0(B^0) = \tilde{P}_0 \tilde{B}
\tilde{P}_0$. 
This shows that $\tilde{P}_0 \tilde{A} \tilde{P}_0 = C^*(\alpha_0 \circ
j_0^{-1}(\underset{l=0}{\overset{\infty}{\bigcup}} T_l) \cup \{ S \})$. 
\par
Observe that 
\begin{multline} \label{equ:5}
S \alpha^0 \circ j_0^{-1}(b)S^* = U \tilde{P}_0 \alpha^0 \circ j_0^{-1}(b) \tilde{P}_0 U^* = U \alpha^0 \circ
j_0^{-1}(b) U^* = \Phi(\alpha^0 \circ j_0^{-1}(b)) = \\ 
= \alpha^0 \circ j_0^{-1} \circ \beta(b) = \alpha^0 \circ j_0^{-1}(VbV^*).
\end{multline}
Since for every $m > 0$ $T_m$ can be constructed from $T_0$ and $"\Ad(V)"$ equation (\ref{equ:5}) shows that 
$\tilde{P}_0 \tilde{A} \tilde{P}_0 = C^*(\alpha_0 \circ
j_0^{-1}(\underset{l=0}{\overset{\infty}{\bigcup}} T_l) \cup \{ S \}) = C^*(\alpha_0 \circ j_0^{-1}(T_0) \cup 
\{ S \}) = C^*( \{ S_1, \dots, S_n, S \}).$ 
\par
We want to apply now Theorem \ref{thm:1} to the $C^*$-algebra $A \overset{def}{=} \tilde{P}_0 \tilde{A}
\tilde{P}_0$. \\ 
$S_i = \alpha^0 \circ j_0^{-1} (V_i)$ are clearly isometries ($i=1, \dots, n$). 
$S^* S = \tilde{P}_0 U^* U \tilde{P}_0 = \tilde{P}_0$ and therefore $S$ is also an isometry. 
Thus condition (1) holds.\\ 
It is clear from (\ref{equ:5}) that $SS^* = \alpha^0 \circ j_0^{-1}(VV^*)$. 
Therefore 
\begin{multline*}
0 = \alpha^0 \circ j_0^{-1}(0) = 
\alpha^0 \circ j_0^{-1}((I - VV^*) \underset{i=1}{\overset{n}{\prod}} (I - V_i V_i^*)) = \\ 
= (\tilde{P}_0 - \alpha^0 \circ j_0^{-1}(VV^*))\underset{i=1}{\overset{n}{\prod}} (\tilde{P}_0 - 
\alpha^0 \circ j_0^{-1}(V_i V_i^*)) = 
(\tilde{P}_0 - SS^*) \underset{i=1}{\overset{n}{\prod}} (\tilde{P}_0 - S_i S_i^*).
\end{multline*}
This proves that condition (4) holds. \\ 
Conditions (2) and (3) obviously hold for all pairs of isometries from $\{ S_1, \dots, S_n \}$. 
If $n \geq i > k$ then $S_i S_i^* S S^* = \alpha^0 \circ j_0^{-1}(V_i V_i^* V V^*) = 0$, so condition (3) holds 
also for all paris $(S_i ,S)$ with $k < i \leq n$. 
For $1 \leq i \leq k$ one has 
\begin{multline*}
S S_i = S \alpha^0 \circ j_0^{-1}(V_i) = S \alpha^0 \circ j_0^{-1}(V_i) S^* S = \Phi(\alpha^0 \circ
j_0^{-1}(V_i)) S = \alpha^0 \circ j_0^{-1}(V V_i V^*) S = \\ 
= \alpha^0 \circ j_0^{-1}(V_i V V^*) S = \alpha^0 \circ
j_0^{-1}(V_i) \alpha^0 \circ j_0^{-1}(V V^*) S = S_i S S^* S = S_i S.
\end{multline*}
This shows that $S S_i = S_i S$. 
In the same way one can show that $S S_i^* = S_i^* S$. 
Therefore condition (4) holds for all pairs $(S, S_i)$ with $1 \leq i \leq k$. \\ 
Applying Theorem \ref{thm:1} we get $A \cong C^*_Q(\Gamma)$. 
Obviously we also have $C^*_Q(\Gamma) \cong \tilde{P}_m \tilde{A} \tilde{P}_m$ for each $m \in \mathbb{Z}$. 
\par
We reming here (see \cite[IV.3.1]{B06}, \cite[\S22]{B98}) that each $C^*$-algebra in the small bootstrap class 
$\mathfrak{N}$ satisfies the Universal Coefficient Theorem. 
The small bootstrap class $\mathfrak{N}$ is the smallest class of $C^*$-algebras that satisfy: \\ 
(i) $\mathbb{C} \in \mathfrak{N}.$ \\ 
(ii) $\mathfrak{N}$ is closed under stable isomorphism. \\ 
(iii) $\mathfrak{N}$ is closed under inductive limits. \\ 
(iv) $\mathfrak{N}$ is closed under crossed-products by $\mathbb{Z}$. \\ 
(v) If $0 \to \mathfrak{I} \to \mathfrak{A} \to \mathfrak{A}/\mathfrak{I} \to 0$ is an exact sequence, and two 
of $\mathfrak{I}, \mathfrak{A}, \mathfrak{A}/\mathfrak{I}$ are in $\mathfrak{N}$, so is the third.
\par
The $C^*$-algebras in this class are all nuclear.
\par
The following proposition holds:  

\begin{prop} \label{prop:4}
In the above settings: $\tilde{A} \cong \tilde{B} \rtimes_{\Phi} \mathbb{Z}$ and $A \cong C_Q^*(\Gamma)$ is 
Morita equivalent to $\tilde{A}$. 
Both of the $C^*$-algebras $A$ and $\tilde{A}$ are simple, belong to $\mathfrak{N}$ and 
${\bf K}_*(\tilde{A}) = {\bf K}_*(A)$. 
Also if we suppose that $[\tilde{P}_0]_0$ generates ${\bf K}_0(\tilde{A})$ then it follows that 
$[\tilde{P}_0]_0$ generates ${\bf K}_0(A)$. 
\end{prop}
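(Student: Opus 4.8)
The plan is to take the six assertions in the order in which the preceding construction makes them cheapest. The identification $\tilde{A}\cong\tilde{B}\rtimes_{\Phi}\mathbb{Z}$ is almost definitional: $\tilde{A}$ was introduced as the crossed product, faithfully represented on $\mathfrak{H}$ with $\Phi$ implemented by the unitary $U$, so $C^*(\tilde{B}\cup\{U\})$ is the image of the integrated covariant representation; since $\mathbb{Z}$ is amenable the full and reduced crossed products coincide and the canonical surjection $\tilde{B}\rtimes_{\Phi}\mathbb{Z}\to\tilde{A}$ is an isomorphism. I would then establish the Morita equivalence by proving that $\tilde{P}_0$ is a \emph{full} projection. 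By \eqref{P} we have $\tilde{P}_m=\Phi^{-m}(\tilde{P}_0)=(U^*)^m\tilde{P}_0U^m$, so every $\tilde{P}_m$ lies in the closed two-sided ideal $\langle\tilde{P}_0\rangle$. Because $\{\tilde{P}_m\}$ is an increasing approximate unit for $\tilde{B}$ (one has $\tilde{P}_{m-1}=\Phi(\tilde{P}_m)<\tilde{P}_m$ and $\bigcup_m\alpha^m(B^m)$ is dense), it follows that $\tilde{B}\subset\langle\tilde{P}_0\rangle$; and since $U\in M(\tilde{A})$ absorbs into the ideal, the dense set of elements $\sum b_iU^i$ with $b_i\in\tilde{B}$ also lies in $\langle\tilde{P}_0\rangle$. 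Hence $\langle\tilde{P}_0\rangle=\tilde{A}$, $A=\tilde{P}_0\tilde{A}\tilde{P}_0$ is a full corner, and $A$ is Morita equivalent to $\tilde{A}$.

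Simplicity then follows without further crossed-product analysis: $A\cong C^*_Q(\Gamma)$ is simple by Theorem \ref{thm:1}, and the ideal lattice is a Morita invariant, so $\tilde{A}$ is simple as well. The equality ${\bf K}_*(\tilde{A})={\bf K}_*(A)$ is immediate since $K$-theory is preserved under Morita equivalence; concretely, the inclusion $\iota\colon A=\tilde{P}_0\tilde{A}\tilde{P}_0\hookrightarrow\tilde{A}$ of the full corner induces an isomorphism $\iota_*\colon{\bf K}_*(A)\to{\bf K}_*(\tilde{A})$. The same map disposes of the last assertion: the unit of $A$ is $\tilde{P}_0$, so $\iota_*[\tilde{P}_0]_0=[\tilde{P}_0]_0$, and if $[\tilde{P}_0]_0$ generates ${\bf K}_0(\tilde{A})$ then, $\iota_*$ being an isomorphism, $[\tilde{P}_0]_0$ generates ${\bf K}_0(A)$.

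It remains to place $A$ and $\tilde{A}$ in $\mathfrak{N}$, and here I would climb the tower of subalgebras. The base case is $T_0\cong C^*(\Gamma')$: from the extension $0\to\mathcal{I}_0\to T_0\to T_0/\mathcal{I}_0\to 0$ with $\mathcal{I}_0\cong\mathcal{K}$ and $T_0/\mathcal{I}_0\cong C^*_Q(\Gamma')$, together with the outer inductive hypothesis (on the number of vertices) that $C^*_Q(\Gamma')\in\mathfrak{N}$ and closure property (v), one gets $T_0\in\mathfrak{N}$. By Lemma \ref{lemma:T} each $T_m\cong\mathcal{K}^{\otimes m}\otimes T_0$ is stably isomorphic to $T_0$, hence in $\mathfrak{N}$ by (ii), and similarly $\mathcal{I}_{m-1}\cong\mathcal{K}^{\otimes m}\in\mathfrak{N}$. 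I would then induct on $m$ using the commutative diagram \eqref{equ:3}: assuming $B_{m-1}\in\mathfrak{N}$, property (v) applied to \eqref{equ:2} gives $B_{m-1}/\mathcal{I}_{m-1}\in\mathfrak{N}$; the isomorphism $\pi_m$ identifies this with $B_m/T_m$; and property (v) applied to \eqref{equ:1} yields $B_m\in\mathfrak{N}$. Passing to the limit by (iii) gives $B=\varinjlim B_m\in\mathfrak{N}$, whence $\tilde{B}=\varinjlim(B^m,\alpha_m)\in\mathfrak{N}$ by (iii) again, and finally $\tilde{A}=\tilde{B}\rtimes_{\Phi}\mathbb{Z}\in\mathfrak{N}$ by closure under crossed products by $\mathbb{Z}$ (iv). Since $A$ is a full corner of the separable $\tilde{A}$, it is stably isomorphic to $\tilde{A}$, so $A\in\mathfrak{N}$ by (ii).

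I expect the bootstrap-class step to be the main obstacle, chiefly in keeping the two nested inductions disentangled: the outer induction on the number of vertices that furnishes $C^*_Q(\Gamma')\in\mathfrak{N}$ and hence $T_0\in\mathfrak{N}$, and the inner induction on $m$ that propagates membership up the chain $B_0\subset B_1\subset\cdots$ through \eqref{equ:3}. The delicate input is that \eqref{equ:3} is precisely what allows property (v) to transfer $\mathfrak{N}$-membership from the ideal $\mathcal{I}_{m-1}$ and the quotient to $B_m$, and this rests on Lemma \ref{lemma:3} (the identification $B_m\cap T_{m+1}=\mathcal{I}_m$); I would therefore make sure that identification, and the resulting exactness of the rows of \eqref{equ:3}, is fully in hand before running the induction.
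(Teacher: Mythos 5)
Your proof is correct, and for the bootstrap-class portion it coincides with the paper's argument: the same double induction (outer on the number of vertices, furnishing $T_0\cong C^*(\Gamma')\in\mathfrak{N}$ via the extension by $\mathcal{I}_0\cong\mathcal{K}$; inner on $m$, pushing membership up the chain $B_0\subset B_1\subset\cdots$ through the diagram \eqref{equ:3} and the isomorphism $\pi_m$), followed by closure under inductive limits, crossed products by $\mathbb{Z}$, and stable isomorphism. The one place you genuinely diverge is the order of the fullness/simplicity deductions. The paper first proves that $\tilde{A}$ is simple, by writing $\tilde{A}=\overline{\bigcup_m\tilde{P}_m\tilde{A}\tilde{P}_m}$ and using that each corner $\tilde{P}_m\tilde{A}\tilde{P}_m\cong C^*_Q(\Gamma)$ is simple by Theorem \ref{thm:1}; fullness of $\tilde{P}_0$ is then automatic, since every projection in a simple $C^*$-algebra is full. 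You instead prove fullness of $\tilde{P}_0$ directly from \eqref{P} together with the observation that $(\tilde{P}_m)$ is an increasing approximate unit for $\tilde{B}$, and then recover simplicity of $\tilde{A}$ from simplicity of $A\cong C^*_Q(\Gamma)$ via the Rieffel correspondence between the ideal lattices of Morita equivalent algebras. Both routes rest on the same underlying fact (that the $\tilde{P}_m$ exhaust $\tilde{A}$), so neither buys much over the other; yours avoids having to argue that an increasing union of simple hereditary subalgebras is simple, while the paper's gets fullness for free once simplicity is in hand. Everything else --- the identification $\tilde{A}\cong\tilde{B}\rtimes_\Phi\mathbb{Z}$, the Morita equivalence and Brown's theorem, the $K$-theory isomorphism induced by the full-corner inclusion, and the transfer of the generator $[\tilde{P}_0]_0$ --- matches the paper, which leaves these points mostly implicit.
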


\begin{proof}
We showed above that $\tilde{P}_m \tilde{A} \tilde{P}_m \cong C^*_Q(\Gamma)$ for each $m \in \mathbb{Z}$. 
It is easy to see that $\tilde{A} = \overline{\underset{m=0}{\overset{\infty}{\bigcup}} \tilde{P}_m \tilde{A}
\tilde{P}_m}$ and since each $\tilde{P}_m \tilde{A} \tilde{P}_m$ is simple from this follows that $\tilde{A}$ is 
simple too. 
Therefore every projection in $\tilde{A}$ is full. 
In particular $\tilde{P}_0$ is a full projection and therefore $A = \tilde{P}_0 \tilde{A} \tilde{P}_0$ is a full 
corner of $\tilde{A}$ and is therefore Morita equivalent to $\tilde{A}$. 
It follows that $A$ and $\tilde{A}$ are stably isomorphic (by Brown's Theorem \cite{B77}) and therefore 
${\bf K}_*(A) = {\bf K}_*(\tilde{A})$. 
\par 
If $\tilde{A}$ belongs to $\mathfrak{N}$ then from the definition follows that $A$ also does since it is stably 
isomorphic to $\tilde{A}$. 
\par
To conclude the proof of the lemma it remains to show that starting from any finite graph $G$ with
$G^{\mathrm{opp}}$ connected and going through the above construction the $C^*$-algebra (let us denote it by 
$\tilde{A}_G$ - the analogue of $\tilde{A}$ for $G$) belongs to $\mathfrak{N}$. 
We will do this by using induction on the number of the vertices of $G$. 
If $G$ has only two vertices and no edges then $C^*_Q(G) \cong \mathcal{O}_2$ and $C^*(G) \cong \mathcal{E}_2$ 
so the statement for this graph is true. 
Suppose that the statement is true for any graph $G$ with at most $n \geq 2$ vertices such that its opposite 
graph $G^{\mathrm{opp}}$ is connected. 
In particular $C^*_Q(\Gamma')$ (and therefore also $C^*(\Gamma')$) belong to $\mathfrak{N}$. 
Then $T_0 \cong C^*(\Gamma')$ as constructed above also does. 
Since the bootstrap category is closed under stabilization, extensions, inductive limits and crossed products by 
$\mathbb{Z}$ we conclude using induction that the $C^*$-algebra $\tilde{A}$ is also nuclear and belong to the 
small bootstrap class (we use diagram (\ref{equ:3}) together with Lemma \ref{lemma:T} and the fact that $\pi_m$ 
is an isomorphism for all $m \in \mathbb{N}$). 
Finally as we showed in the last paragraph this implies that $A$ belongs to $\mathfrak{N}$. 
This concludes the inductive step because $A \cong C^*_Q(\Gamma)$ and $\Gamma$ is an arbitrary graph with $n+1$ 
vertices such that $\Gamma^{\mathrm{opp}}$ is connected. 
\par
The final statement of the proposition in obvious. 
\par
The proposition is proved. 
\end{proof}

\section{The Computation of the ${\bf K}$-Theory} 

For a finite graph $G$ with $G^{\mathrm{opp}}$ connected Crisp and Laca conjectured in \cite{CL07} that the order 
of $[ 1_{C^*_Q(G)} ]_0$ in ${\bf K}_0(C^*_Q(G))$ is $| \chi(G) |$, where $\chi(G)$ is the Euler characteristics of 
$G$. 
$\chi(G)$ is defined as 
$$\chi(G) = 1 - \underset{j=1}{\overset{\infty}{\sum}} (-1)^{j-1} \times \{ \text{ number of complete subgraphs 
of $G$ on $j$ vertices } \}.$$

We will use the settings from the previous section. Denote $P_m \overset{def}{=} V^m (V^*)^m,\ m \in 
\mathbb{N}_0$. Denote also $Q \overset{def}{=} \underset{i=1}{\overset{k}{\prod}} (I - V_i V_i^*)$. 
Let $\Gamma_k = \{ (i,j) |\ 1 \leq i,\ j \leq k,\ (i,j) \in \Gamma' \} \subset \Gamma'$. 
\par
Since the vertex $n+1$ of $\Gamma$ is connected with each of the vertices $1, \dots, k$ and none of the others 
we have  
\begin{multline*}
\chi(\Gamma) = 1 - \underset{j=1}{\overset{n}{\sum}} (-1)^{j-1} \times \{ \text{ number of complete subgraphs 
of $\Gamma'$ on $j$ vertices } \} - \\ 
- ( 1 - \underset{j=1}{\overset{k}{\sum}} (-1)^{j-1} \times \{ 
\text{ number of complete subgraphs of $\Gamma_k$ on $j$ vertices } \} ).
\end{multline*}
Therefore 
\begin{equation} \label{Euler}
\chi(\Gamma) = \chi(\Gamma') - \chi(\Gamma_k).
\end{equation}

The following lemma is based on the "Euler characteristics idea" and is essentially due to Crisp and Laca:

\begin{lemma} \label{lemma:3.1}
If $E$ is a $C^*$-subalgebra of $B$ that contains $T_m$ (for $m \in \mathbb{N}_0$) we have 
\begin{equation} \label{equ:I}
\chi(\Gamma') [P_m]_0 = [P_{m+1} Q]_0 \text{ (in ${\bf K}_0(E)$).} 
\end{equation}

If $E$ is a $C^*$-subalgebra of $B$ that contains $T_m$ and $T_{m+1}$ (for $m \in \mathbb{N}_0$) we have 
\begin{equation} \label{equ:II}
\chi(\Gamma') [P_m]_0 = \chi(\Gamma_k) [P_{m+1}]_0 \text{ (in ${\bf K}_0(E)$).}
\end{equation}

If $E$ is a $C^*$-subalgebra of $B$ that contains $T_{m+1}$ (for $m \in \mathbb{N}_0$) we have 
\begin{equation} \label{equ:III}
[P_{m+1} Q]_0 = \chi(\Gamma_k) [P_{m+1}]_0 \text{ (in ${\bf K}_0(E)$).}
\end{equation}
\end{lemma}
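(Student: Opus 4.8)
The plan is to reduce all three identities to a single ${\bf K}_0$-computation carried out inside $T_0 \cong C^*(\Gamma')$: for any subset $F$ of the vertices $\{1,\dots,n\}$, writing $\Gamma'_F$ for the subgraph of $\Gamma'$ induced on $F$, I would first prove
\[
\Big[ \prod_{i \in F} (I - V_i V_i^*) \Big]_0 = \chi(\Gamma'_F)\, [1_{T_0}]_0 \quad \text{in } {\bf K}_0(T_0).
\]
To do this I expand the product as $\sum_{T \subseteq F} (-1)^{|T|} P_T$ with $P_T := \prod_{i \in T} V_i V_i^*$ (taken in increasing order of index) and analyze each monomial. If $T$ is a clique of $\Gamma'$, the generators $\{V_i\}_{i \in T}$ pairwise commute and $*$-commute, so $W := \prod_{i \in T} V_i$ is an isometry in $T_0$ with $W^*W = I$ and $WW^* = P_T$; hence $[P_T]_0 = [1_{T_0}]_0$. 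If $T$ is not a clique, I claim $P_T = 0$. Summing the signs then yields exactly $\chi(\Gamma'_F)$ by the definition of the Euler characteristic (the empty clique contributes the leading $1$ and the $j$-cliques contribute $(-1)^j$ times their number).

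Next I would realize the relevant projections as images of a $*$-homomorphism. Since $V$ commutes and $*$-commutes with $V_1, \dots, V_k$, the isometries $V^{m+1}, (V^*)^{m+1}$ commute with $Q = \prod_{i=1}^{k} (I - V_i V_i^*)$, and using the relation $\prod_{i=1}^{n} (I - V_i V_i^*) = V Q V^*$ established above one obtains the two key identities
\[
P_{m+1} Q = V^m \Big( \prod_{i=1}^{n} (I - V_i V_i^*) \Big) (V^*)^m = \beta^m(R),
\qquad
P_{m+1} Q = V^{m+1} Q (V^*)^{m+1} = \beta^{m+1}(Q),
\]
where $R = \prod_{i=1}^{n}(I - V_i V_i^*) \in T_0$, where $Q \in T_0$, and $\beta(b) = V b V^*$. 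Because $\beta^{l}(b) = V^{l} b (V^*)^{l}$ and $(V^*)^{l} V^{l} = I$, the map $\beta^{l}$ restricts to an injective $*$-homomorphism $T_0 \to T_{l}$ with $\beta^{l}(1_{T_0}) = P_{l}$; hence $(\beta^{l})_*$ transports the displayed ${\bf K}_0$-identity into ${\bf K}_0(E)$ for any $E \supseteq T_{l}$.

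Then I would read off the three statements. For (\ref{equ:I}), take $F = \{1,\dots,n\}$, so $\Gamma'_F = \Gamma'$; applying $(\beta^m)_*$ (legitimate as $T_m \subseteq E$) gives $[P_{m+1}Q]_0 = [\beta^m(R)]_0 = \chi(\Gamma')[\beta^m(1_{T_0})]_0 = \chi(\Gamma')[P_m]_0$. For (\ref{equ:III}), take $F = \{1,\dots,k\}$, so $\Gamma'_F = \Gamma_k$; applying $(\beta^{m+1})_*$ (legitimate as $T_{m+1} \subseteq E$) gives $[P_{m+1}Q]_0 = [\beta^{m+1}(Q)]_0 = \chi(\Gamma_k)[\beta^{m+1}(1_{T_0})]_0 = \chi(\Gamma_k)[P_{m+1}]_0$. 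Finally (\ref{equ:II}) is the composite of (\ref{equ:I}) and (\ref{equ:III}), available exactly when $E$ contains both $T_m$ and $T_{m+1}$; the resulting relation $\chi(\Gamma')[P_m]_0 = \chi(\Gamma_k)[P_{m+1}]_0$ is moreover consistent with (\ref{Euler}).

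The main obstacle is the vanishing $P_T = 0$ for non-clique $T$, since the projections $V_i V_i^*$ need not commute and the monomial is ordered. I would resolve it by choosing a non-adjacent pair $i < j$ in $T$ with $j - i$ minimal: then every $l \in T$ with $i < l < j$ is adjacent to $i$ (otherwise $(i,l)$ would be a closer non-adjacent pair), so $V_i^*$ commutes with each intervening factor $V_l V_l^*$ by relation (2); sliding $V_i^*$ to the right until it meets $V_j$ and invoking $V_i^* V_j = 0$ (relation (3)) collapses the entire monomial. Everything after the projection bookkeeping is functoriality of ${\bf K}_0$.
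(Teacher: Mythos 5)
Your proof is correct and follows essentially the same route as the paper: both rest on the identity $\prod_{i=1}^{n}(I-V_iV_i^*)=VQV^*$, an inclusion--exclusion expansion in ${\bf K}_0$ over cliques of $\Gamma'$ (non-clique monomials vanishing via relation (3), clique monomials being Murray--von Neumann equivalent to the relevant unit via the isometry $V_{i_1}\cdots V_{i_j}$), and the observation that the signed clique count is exactly $\chi$. The only difference is organizational: you compute once in ${\bf K}_0(T_0)$ and push the identity forward along the $*$-homomorphism $\beta^l$ by functoriality, whereas the paper carries the conjugation by $V^m$ through an explicit induction on the number of factors.
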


\begin{proof}
In the last section we showed that
\begin{equation} \label{equ:6}
\underset{i=1}{\overset{n}{\prod}} (I - V_i V_i^*) = V \underset{i=1}{\overset{k}{\prod}} (I - V_i V_i^*) V^*.
\end{equation}
Since $V^m \underset{i=1}{\overset{n}{\prod}} (I - V_i V_i^*) (V^*)^m = \underset{i=1}{\overset{n}{\prod}} 
(V^m (V^*)^m - V^m V_i V_i^* (V^*)^m)$ then by multiplying equation (\ref{equ:6}) by $V^m$ on the left and by 
$(V^*)^m$ on the right we get 
\begin{multline*} 
\underset{i=1}{\overset{n}{\prod}} (V^m (V^*)^m - V^m V_i V_i^* (V^*)^m) = V^{m+1} 
\underset{i=1}{\overset{k}{\prod}} (I - V_i V_i^*) (V^*)^{m+1} = \\ 
= V^{m+1} (V^*)^{m+1} Q.
\end{multline*}

This equation is actually three equations which hold in certain $C^*$-subalgebras of $B$. 
We record them here:
\par
If $E$ is a $C^*$-subalgebra of $B$ that contains $T_m$ (for $m \in \mathbb{N}_0$) we have
\begin{equation} \label{equ:7.1}
\underset{i=1}{\overset{n}{\prod}} (V^m (V^*)^m - V^m V_i V_i^* (V^*)^m) = V^{m+1} (V^*)^{m+1} Q.
\end{equation}

If $E$ is a $C^*$-subalgebra of $B$ that contains $T_m$ and $T_{m+1}$ (for $m \in \mathbb{N}_0$) we have 
\begin{equation} \label{equ:7.2}
\underset{i=1}{\overset{n}{\prod}} (V^m (V^*)^m - V^m V_i V_i^* (V^*)^m) = V^{m+1} 
\underset{i=1}{\overset{k}{\prod}} (I - V_i V_i^*) (V^*)^{m+1}.
\end{equation}

If $E$ is a $C^*$-subalgebra of $B$ that contains $T_{m+1}$ (for $m \in \mathbb{N}_0$) we have 
\begin{equation} \label{equ:7.3}
V^{m+1} \underset{i=1}{\overset{k}{\prod}} (I - V_i V_i^*) (V^*)^{m+1} = V^{m+1} (V^*)^{m+1} Q.
\end{equation}

Note that if $E$ is an appropriate $C^*$-subalgebra of $B$ then for each projection 
$P$ that commutes with $V_1 V_1^*$ we have 
$[ V^m P (V^*)^m - V^m P V_1 V_1^* (V^*)^m ]_0 = [ V^m P (V^*)^m ]_0 - [ V^m P V_1 V_1^* (V^*)^m ]_0$. 
Suppose by induction that for some $n > l \geq 1$ if $P$ is a projection that commutes with 
$V_1 V_1^*, \dots, V_l V_l^*$ we have 
\begin{multline} \label{equ:P}
[ V^m \underset{i=1}{\overset{l}{\prod}}  (P - P V_i V_i^*) (V^*)^m ]_0 = \\ 
[ V^m P (V^*)^m ]_0 - \underset{i=1}{\overset{l}{\sum}} [ V^m P V_i V_i^* (V^*)^m ]_0 + \\ 
+ \underset{j=2}{\overset{l}{\sum}} 
(-1)^j (\underset{(i_s, i_t) \in \Gamma', 1 \leq s
< t \leq j}{\underset{ 1 \leq i_1 < \dots < i_j \leq l }{\sum}} [ V^m P V_{i_1} \cdots V_{i_j} 
V_{i_j}^* \cdots V_{i_1}^* (V^*)^m ]_0).
\end{multline}
We know that $V_{l+1} V_{l+1}^*$ commutes with each of $V_1 V_1^*, \dots, V_l V_l^*$. 
If $P$ commutes with $V_1 V_1^*, \dots, V_{l+1} V_{l+1}^*$ then we can apply (\ref{equ:P}) to the family 
$V_1 V_1^*, \dots, V_l V_l^*$ and the projection $P V_{l+1} V_{l+1}^*$ to obtain the following equation: 
\begin{multline*} 
[ V^m V_{l+1} V_{l+1}^* \underset{i=1}{\overset{l}{\prod}} (P - P V_i V_i^*) (V^*)^m ]_0 = 
[ V^m \underset{i=1}{\overset{l}{\prod}} (P V_{l+1} V_{l+1}^* - P V_{l+1} V_{l+1}^* V_i V_i^*) (V^*)^m ]_0 = \\
=[ V^m P V_{l+1} V_{l+1}^* (V^*)^m ]_0 - \underset{i=1}{\overset{l}{\sum}} 
[ V^m P V_{l+1} V_{l+1}^* V_i V_i^* (V^*)^m ]_0 + \\ 
+ \underset{j=2}{\overset{l}{\sum}} 
(-1)^j (\underset{(i_s, i_t) \in \Gamma', 1 \leq s
< t \leq j}{\underset{ 1 \leq i_1 < \dots < i_j \leq l }{\sum}} [ V^m P V_{l+1} V_{l+1}^* V_{i_1} 
\cdots V_{i_j} V_{i_j}^* \cdots V_{i_1}^* (V^*)^m ]_0).
\end{multline*}
Now since $V^m V_{l+1} V_{l+1}^* \underset{i=1}{\overset{l}{\prod}} (P - P V_i V_i^*) (V^*)^m < 
V^m \underset{i=1}{\overset{l}{\prod}} (P - P V_i V_i^*) (V^*)^m$ it is easy to see that we have  
\begin{multline*} 
[ V^m (P - P V_{l+1} V_{l+1}^*) \underset{i=1}{\overset{l}{\prod}} (P - P V_i V_i^*) (V^*)^m ]_0 = \\ 
= [ V^m \underset{i=1}{\overset{l}{\prod}} (P - P V_i V_i^*) (V^*)^m - V^m V_{l+1} V_{l+1}^* 
\underset{i=1}{\overset{l}{\prod}} (P - P V_i V_i^*) (V^*)^m ]_0 = \\ 
= [ V^m \underset{i=1}{\overset{l}{\prod}} (P - P V_i V_i^*) (V^*)^m ]_0 - 
[ V^m V_{l+1} V_{l+1}^* \underset{i=1}{\overset{l}{\prod}} (P - P V_i V_i^*) (V^*)^m ]_0 = \\ 
= [ V^m P (V^*)^m ]_0 - \underset{i=1}{\overset{l}{\sum}} [ V^m P V_i V_i^* (V^*)^m ]_0 + \\ 
+ \underset{j=2}{\overset{l}{\sum}} 
(-1)^j \underset{(i_s, i_t) \in \Gamma', 1 \leq s
< t \leq j}{\underset{ 1 \leq i_1 < \dots < i_j \leq l }{\sum}} [ V^m P V_{i_1} \cdots V_{i_j} 
V_{i_j}^* \cdots V_{i_1}^* (V^*)^m ]_0 - 
[ V^m P V_{l+1} V_{l+1}^* (V^*)^m ]_0 + \\ 
+ \underset{i=1}{\overset{l}{\sum}} [ V^m P V_{l+1} V_{l+1}^* V_i V_i^* (V^*)^m ]_0 - \\ 
- \underset{j=2}{\overset{l}{\sum}} 
(-1)^j \underset{(i_s, i_t) \in \Gamma', 1 \leq s
< t \leq j}{\underset{ 1 \leq i_1 < \dots < i_j \leq l }{\sum}} [ V^m P V_{l+1} V_{l+1}^* V_{i_1} 
\cdots V_{i_j} V_{i_j}^* \cdots V_{i_1}^* (V^*)^m ]_0 = \\ 
= [ V^m P (V^*)^m ]_0 - \underset{i=1}{\overset{l+1}{\sum}} [ V^m P V_i V_i^* (V^*)^m ]_0 + \\ 
+ \underset{j=2}{\overset{l+1}{\sum}} 
(-1)^j \underset{(i_s, i_t) \in \Gamma', 1 \leq s
< t \leq j}{\underset{ 1 \leq i_1 < \dots < i_j \leq l+1 }{\sum}} [ V^m P V_{i_1} \cdots V_{i_j} 
V_{i_j}^* \cdots V_{i_1}^* (V^*)^m ]_0.
\end{multline*}
Then by induction follows that for $l = k$ or $l = n$ we get  
\begin{multline*} 
[ \underset{i=1}{\overset{l}{\prod}} (V^m (V^*)^m - V^m V_i V_i^* (V^*)^m) ]_0 = \\ 
= [ I ]_0 - \underset{i=1}{\overset{l}{\sum}} [ V^m V_i V_i^* (V^*)^m ]_0 + 
\underset{j=2}{\overset{l}{\sum}} 
(-1)^j \underset{(i_s, i_t) \in \Gamma', 1 \leq s
< t \leq j}{\underset{ 1 \leq i_1 < \dots < i_j \leq l }{\sum}} [ V^m V_{i_1} \cdots V_{i_j} 
V_{i_j}^* \cdots V_{i_1}^* (V^*)^m]_0.
\end{multline*}

Combining the last equation with equations (\ref{equ:7.1}), (\ref{equ:7.2}) and (\ref{equ:7.3}) we obtain the 
following equations:
\par
If $E$ is a $C^*$-subalgebra of $B$ that contains $T_m$ (for $m \in \mathbb{N}_0$) we have
\begin{multline} \label{equ:8.1} 
[ V^m (V^*)^m ]_0 - \underset{i=1}{\overset{n}{\sum}} [ V^m V_i V_i^* (V^*)^m ]_0 + \\ 
+ \underset{j=2}{\overset{n}{\sum}} 
(-1)^j (\underset{(i_s, i_t) \in \Gamma', 1 \leq s
< t \leq j}{\underset{ 1 \leq i_1 < \dots < i_j \leq n }{\sum}} [ V^m V_{i_1} \cdots V_{i_j} 
V_{i_j}^* \cdots V_{i_1}^* (V^*)^m ]_0) = \\ 
= [ V^{m+1} (V^*)^{m+1} Q ]_0.
\end{multline}

If $E$ is a $C^*$-subalgebra of $B$ that contains $T_m$ and $T_{m+1}$ (for $m \in \mathbb{N}_0$) we have
\begin{multline} \label{equ:8.2} 
[ V^m (V^*)^m ]_0 - \underset{i=1}{\overset{n}{\sum}} [ V^m V_i V_i^* (V^*)^m ]_0 + \\ 
+ \underset{j=2}{\overset{n}{\sum}} 
(-1)^j (\underset{(i_s, i_t) \in \Gamma', 1 \leq s
< t \leq j}{\underset{ 1 \leq i_1 < \dots < i_j \leq n }{\sum}} [ V^m V_{i_1} \cdots V_{i_j} 
V_{i_j}^* \cdots V_{i_1}^* (V^*)^m ]_0) = \\ 
= [ V^{m+1} (V^*)^{m+1} ]_0 - \underset{i=1}{\overset{k}{\sum}} [ V^{m+1} V_i V_i^* (V^*)^{m+1} ]_0 + \\
+ \underset{j=2}{\overset{k}{\Sigma}} 
(-1)^j (\underset{(i_s, i_t) \in \Gamma_k, 1 \leq s
< t \leq j}{\underset{ 1 \leq i_1 < \dots < i_j \leq k }{\sum}} [ V^{m+1} V_{i_1} \cdots V_{i_j} 
V_{i_j}^* \cdots V_{i_1}^* (V^*)^{m+1} ]_0).
\end{multline}

If $E$ is a $C^*$-subalgebra of $B$ that contains $T_{m+1}$ (for $m \in \mathbb{N}_0$) we have
\begin{multline} \label{equ:8.3}
[ V^{m+1} (V^*)^{m+1} ]_0 - \underset{i=1}{\overset{k}{\sum}} [ V^{m+1} V_i V_i^* (V^*)^{m+1} ]_0 + \\
+ \underset{j=2}{\overset{k}{\sum}} 
(-1)^j (\underset{(i_s, i_t) \in \Gamma_k, 1 \leq s
< t \leq j}{\underset{ 1 \leq i_1 < \dots < i_j \leq k }{\sum}} [ V^{m+1} V_{i_1} \cdots V_{i_j} 
V_{i_j}^* \cdots V_{i_1}^* (V^*)^{m+1} ]_0) = \\ 
= [ V^{m+1} (V^*)^{m+1} Q ]_0.
\end{multline}

It is easy to see that in each $C^*$-subalgebra of $B$ that contains $T_m$ the projection $V^m V_{i_1} \cdots 
V_{i_j} V_{i_j}^* \cdots V_{i_1}^* (V^*)^m$ is Murray - von Neumann equivalent to $V^m (V^*)^m$ via the partial 
isometry $V^m V_{i_1} \cdots V_{i_j} (V^*)^m \in T_m$, where $\{ i_1, \dots, i_j \} \subset \{ 1, \dots, n \}$.
\\ 
This observation together with equations (\ref{equ:8.1}), (\ref{equ:8.2}) and (\ref{equ:8.3}) give:
\par
If $E$ is a $C^*$-subalgebra of $B$ that contains $T_m$ we have 
\begin{equation} \label{equ:9.1}
[P_m]_0 - \underset{i=1}{\overset{n}{\sum}} [P_m]_0 + \underset{j=2}{\overset{n}{\sum}} 
(-1)^j (\underset{(i_s, i_t) \in \Gamma', 1 \leq s
< t \leq j}{\underset{ 1 \leq i_1 < \dots < i_j \leq n }{\sum}} [P_m]_0) = [P_{m+1} Q]_0 .
\end{equation}

If $E$ is a $C^*$-subalgebra of $B$ that contains $T_m$ and $T_{m+1}$ then we have
\begin{multline} \label{equ:9.2}
[P_m]_0 - \underset{i=1}{\overset{n}{\sum}} [P_m]_0 + \underset{j=2}{\overset{n}{\sum}} 
(-1)^j (\underset{(i_s, i_t) \in \Gamma', 1 \leq s
< t \leq j}{\underset{ 1 \leq i_1 < \dots < i_j \leq n }{\sum}} [P_m]_0) = \\ 
= [P_{m+1}]_0 - \underset{i=1}{\overset{k}{\sum}} [P_{m+1}]_0 + \underset{j=2}{\overset{k}{\sum}} 
(-1)^j (\underset{(i_s, i_t) \in \Gamma_k, 1 \leq s
< t \leq j}{\underset{ 1 \leq i_1 < \dots < i_j \leq k }{\sum}} [P_{m+1}]_0).
\end{multline}

If $E$ is a $C^*$-subalgebra of $B$ that contains $T_{m+1}$ we have 
\begin{equation} \label{equ:9.3}
[P_{m+1}]_0 - \underset{i=1}{\overset{k}{\sum}} [P_{m+1}]_0 + \underset{j=2}{\overset{k}{\sum}} 
(-1)^j (\underset{(i_s, i_t) \in \Gamma_k, 1 \leq s
< t \leq j}{\underset{ 1 \leq i_1 < \dots < i_j \leq k }{\sum}} [P_{m+1}]_0) = [P_{m+1} Q]_0.
\end{equation}

The last three equations are what we had to prove.
\end{proof}

\begin{remark} \label{remark}
It also follows from this lemma that if we denote the isometries that generate $C^*(\Gamma)$ by $\tilde{V},
\tilde{V}_1, \dots, \tilde{V}_n$, then 
$$[ (I - \tilde{V} \tilde{V}^*) \underset{i=1}{\overset{n}{\prod}} (I - \tilde{V}_i \tilde{V}_i^*) ]_0 = 
\chi(\Gamma) [ I ]_0 \text{  (in $K_0(C^*(\Gamma))$).}$$ 
Therefore in the extenstion 
\begin{equation} \label{equ:remark}
0 \to \langle (I - \tilde{V} \tilde{V}^*) \underset{i=1}{\overset{n}{\prod}} (I - \tilde{V}_i \tilde{V}_i^*)
\rangle \overset{I_{\Gamma}}{\to} C^*(\Gamma) \overset{Q_{\Gamma}}{\to} C^*_Q(\Gamma) \to 0
\end{equation}
the map ${I_{\Gamma}}_*$ on ${\bf K}_0$ is given by 
$$[ (I - \tilde{V} \tilde{V}^*) \underset{i=1}{\overset{n}{\prod}} (I - \tilde{V}_i \tilde{V}_i^*) ]_0 \mapsto 
\chi(\Gamma) [ I ]_0.$$
\end{remark}

Now we can state and prove the following

\begin{prop} \label{prop:main}
Suppose that $G$ is a finite graph with at least two vertices and suppose that $G^{\mathrm{opp}}$ is connected. 
Then 
\begin{equation} \label{equ:K}
\begin{array}{cc}
{\bf K}_0(C^*_Q(G)) = 
                           \begin{cases}
			   \mathbb{Z}_{| \chi(G) |}, & \text{ if } \chi(G) \neq 0, \\ 
			   \mathbb{Z}, & \text{ if } \chi(G) = 0,
			   \end{cases}
			   
			   &
			   
\text{  } {\bf K}_1(C^*_Q(G)) = 
                           \begin{cases}
			   0, & \text{ if } \chi(G) \neq 0, \\ 
			   \mathbb{Z}, & \text{ if } \chi(G) = 0, 
			   \end{cases}	
\end{array} 		   
\end{equation}
and $[1_{C^*_Q(G)}]_0$ generates ${\bf K}_0(C^*_Q(G))$ in all cases. 
\par
Moreover ${\bf K}_0(C^*(G)) = \mathbb{Z}$, ${\bf K}_1(C^*(G)) = 0$ and $[1_{C^*(G)}]_0$ generates ${\bf
K}_0(C^*(G))$ in all cases.
\end{prop}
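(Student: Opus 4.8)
The plan is to prove both halves of the proposition at once, by induction on the number of vertices of $G$, carrying the complete statement (the values of ${\bf K}_*(C^*_Q(G))$ and of ${\bf K}_*(C^*(G))$ together with the asserted generators) as inductive hypothesis. The base case is the two-vertex graph with no edge, where $C^*(G)\cong\mathcal E_2$ and $C^*_Q(G)\cong\mathcal O_2$ and $\chi(G)=-1$: the six-term sequence of $0\to\mathcal K\to\mathcal E_2\to\mathcal O_2\to0$ gives ${\bf K}_0(\mathcal E_2)=\mathbb Z[I]_0$, ${\bf K}_1(\mathcal E_2)=0$ and ${\bf K}_0(\mathcal O_2)=0=\mathbb Z_1$, ${\bf K}_1(\mathcal O_2)=0$, as required. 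For the inductive step on $G=\Gamma$ with $n+1$ vertices I use the hypothesis for the $n$-vertex graph $\Gamma'$, so that $T_0\cong C^*(\Gamma')$ has ${\bf K}_0=\mathbb Z[I]_0$, ${\bf K}_1=0$, and $T_0/\mathcal I_0\cong C^*_Q(\Gamma')$ has the known $K$-groups. The first and main task is ${\bf K}_*(C^*_Q(\Gamma))$; since $C^*_Q(\Gamma)\cong A$ is Morita equivalent to $\tilde A=\tilde B\rtimes_\Phi\mathbb Z$ by Proposition \ref{prop:4}, with ${\bf K}_*(C^*_Q(\Gamma))={\bf K}_*(\tilde A)$, it suffices to run Pimsner-Voiculescu for this crossed product.

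Because $\tilde B=\varinjlim(B,\beta)$ and $\Phi$ is the shift, continuity of $K$-theory gives ${\bf K}_*(\tilde B)=\varinjlim({\bf K}_*(B),\beta_*)$, and the usual analysis of the shift on a stationary limit identifies the kernel and cokernel of $1-\Phi_*^{-1}$ on ${\bf K}_*(\tilde B)$ with those of $1-\beta_*$ on ${\bf K}_*(B)$. Everything thus reduces to computing ${\bf K}_*(B)$ together with $\beta_*$, which I would extract from the tower $B_0\subset B_1\subset\cdots$ by running the six-term sequences of the two extensions in diagram (\ref{equ:3}) by induction on $m$. Using $T_m\cong\mathcal K^{\otimes m}\otimes C^*(\Gamma')$ and $\mathcal I_m\cong\mathcal K$, the inductive hypothesis, and Lemma \ref{lemma:3.1} to identify the ideal inclusion $\mathcal I_{m-1}\hookrightarrow T_m$ as multiplication by $\chi(\Gamma_k)$ and $\mathcal I_0\hookrightarrow T_0$ as multiplication by $\chi(\Gamma')$ (equations (\ref{equ:I})--(\ref{equ:III})), I expect to show, when $\chi(\Gamma')\neq0$, that ${\bf K}_1(B_m)=0$, that each $[P_m]_0$ has infinite order, and that ${\bf K}_0(B_m)$ is generated by $[P_0]_0,\dots,[P_m]_0$ subject only to the relations $\chi(\Gamma')[P_j]_0=\chi(\Gamma_k)[P_{j+1}]_0$. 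Writing $t=\beta_*$ (so $t[P_m]_0=[P_{m+1}]_0$) and passing to the limit, this is exactly the statement that, as a $\mathbb Z[t]$-module generated by $x_0=[I]_0$,
\[
{\bf K}_0(B)\cong\mathbb Z[t]/\bigl(\chi(\Gamma_k)\,t-\chi(\Gamma')\bigr),\qquad {\bf K}_1(B)=0 .
\]

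From this presentation the two homology groups of $1-t$ are pure commutative algebra. Setting $t=1$ sends $\chi(\Gamma_k)t-\chi(\Gamma')$ to $\chi(\Gamma_k)-\chi(\Gamma')=-\chi(\Gamma)$, so
\[
\operatorname{coker}(1-t)\cong\mathbb Z[t]/\bigl(t-1,\ \chi(\Gamma_k)t-\chi(\Gamma')\bigr)\cong\mathbb Z/\chi(\Gamma)\mathbb Z ;
\]
moreover $1-t$ is (up to sign) monic, hence not a zero divisor on the torsion submodule, while on ${\bf K}_0(B)\otimes\mathbb Q$ (which is zero or one-dimensional) it acts injectively precisely because $\chi(\Gamma)\neq0$, so $\ker(1-t)=0$. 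Through Pimsner-Voiculescu this gives ${\bf K}_0(\tilde A)=\mathbb Z_{|\chi(\Gamma)|}$ and ${\bf K}_1(\tilde A)=0$ with the class of $x_0=[\tilde P_0]_0$ generating ${\bf K}_0$; by the final clause of Proposition \ref{prop:4} this transfers to $[1_{C^*_Q(\Gamma)}]_0$ generating ${\bf K}_0(C^*_Q(\Gamma))$. The case $\chi(\Gamma)=0$ (that is $\chi(\Gamma_k)=\chi(\Gamma')\neq0$) is read off the same module: now $\operatorname{coker}(1-t)\cong\mathbb Z$ and $\ker(1-t)\cong\mathbb Z$, giving ${\bf K}_0=\mathbb Z$, ${\bf K}_1=\mathbb Z$. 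I expect the main obstacle to be exactly the standing assumption $\chi(\Gamma')\neq0$: it is what forces ${\bf K}_1(B_m)=0$ and makes the ideal maps injective, and when $\chi(\Gamma')=0$ the nonzero group ${\bf K}_1(C^*_Q(\Gamma'))=\mathbb Z$ injects boundary maps into the tower, so this subcase must be treated by a separate, more delicate run of the same bookkeeping (as must the verification that no relations beyond those of Lemma \ref{lemma:3.1} survive into the limit).

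Finally, the moreover-clause for $C^*(\Gamma)$ follows from the already-computed $K$-theory of $C^*_Q(\Gamma)$ through the extension (\ref{equ:remark}), whose kernel is the ideal of compacts identified in Section 1, so there ${\bf K}_0=\mathbb Z$, ${\bf K}_1=0$; by Remark \ref{remark} the map $(I_\Gamma)_*$ on ${\bf K}_0$ carries the generator to $\chi(\Gamma)[I]_0$. If $\chi(\Gamma)\neq0$ the six-term sequence reads $0\to\mathbb Z\xrightarrow{(I_\Gamma)_*}{\bf K}_0(C^*(\Gamma))\xrightarrow{(Q_\Gamma)_*}\mathbb Z_{|\chi(\Gamma)|}\to0$ with ${\bf K}_1(C^*(\Gamma))=0$; since $\ker(Q_\Gamma)_*=\operatorname{im}(I_\Gamma)_*\subseteq\langle[I]_0\rangle$ while $(Q_\Gamma)_*[I]_0=[1]_0$ already generates $\mathbb Z_{|\chi(\Gamma)|}$, an elementary chase forces ${\bf K}_0(C^*(\Gamma))=\langle[I]_0\rangle$, and injectivity of $(I_\Gamma)_*$ makes $[I]_0$ of infinite order, whence ${\bf K}_0(C^*(\Gamma))=\mathbb Z[I]_0$. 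If $\chi(\Gamma)=0$ then $(I_\Gamma)_*=0$, exactness forces the boundary ${\bf K}_1(C^*_Q(\Gamma))=\mathbb Z\to{\bf K}_0(\mathcal K)=\mathbb Z$ and the map $(Q_\Gamma)_*$ to be isomorphisms, and again ${\bf K}_0(C^*(\Gamma))=\mathbb Z[I]_0$, ${\bf K}_1(C^*(\Gamma))=0$. In every case $[1_{C^*(\Gamma)}]_0$ generates, completing the induction and the proposition.
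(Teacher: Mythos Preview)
Your proposal is correct and follows the paper's strategy quite closely: induction on the number of vertices, computation of ${\bf K}_*(B_m)$ through the two interlocking six-term sequences coming from diagram~(\ref{equ:3}), passage to ${\bf K}_*(B)$ and ${\bf K}_*(\tilde B)$ by direct limit, Pimsner--Voiculescu for $\tilde A=\tilde B\rtimes_\Phi\mathbb Z$, and finally the extension~(\ref{equ:remark}) together with Remark~\ref{remark} for $C^*(\Gamma)$. The base case, the use of Proposition~\ref{prop:4} to transfer the generator $[\tilde P_0]_0$ to $[1_{C^*_Q(\Gamma)}]_0$, and your chase for the ``moreover'' clause all match the paper exactly.

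Where you streamline, the paper unfolds: the paper splits the inductive step into five cases according to the vanishing of $\chi(\Gamma')$ and $\chi(\Gamma_k)$ and whether they coincide, and in each case computes ${\bf K}_*(B_m)$, ${\bf K}_*(B)$, and ${\bf K}_*(\tilde B)$ explicitly with named generators before running Pimsner--Voiculescu. Your $\mathbb Z[t]$-module presentation ${\bf K}_0(B)\cong\mathbb Z[t]/(\chi(\Gamma_k)t-\chi(\Gamma'))$ packages the paper's Cases~II, IV and V (all of $\chi(\Gamma')\neq0$) into a single algebraic computation of $\ker(1-t)$ and $\operatorname{coker}(1-t)$, and your reduction from $(\tilde B,\Phi)$ to $(B,\beta)$ via exactness of direct limits (noting that $\beta_*$ acts trivially on both kernel and cokernel of $1-\beta_*$) replaces the paper's explicit description of ${\bf K}_*(\tilde B)$ in each case. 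This is genuinely tidier. On the other hand, the paper's case-by-case treatment makes the inductive verification of the presentation of ${\bf K}_0(B_m)$ completely explicit (including the Five-Lemma/B\'ezout argument in Cases~IV--V that no extra relations appear beyond those of Lemma~\ref{lemma:3.1}), whereas you leave this as ``I expect to show''. You are also right that $\chi(\Gamma')=0$ must be handled separately: in the paper's Case~III the boundary maps $\gamma_m^{\mathrm{ind}}$ become isomorphisms and $\delta_m^{\mathrm{ind}}$ becomes multiplication by $\chi(\Gamma_k)$, still forcing ${\bf K}_1(B_m)=0$, while in Case~I one acquires a nontrivial ${\bf K}_1(B_m)$ that must be tracked through the limit; both are exactly the ``more delicate bookkeeping'' you anticipate.
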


\begin{proof}
We will use induction on the number of vertices of $G$. 
If $G$ has two vertices (and no edges) then $C^*_Q(G) = \mathcal{O}_2$
and $C^*(G) = \mathcal{E}_2$ and in this case certainly the statement is true. 
Suppose that the statement is true
for all graphs $G$ with at most $n \geq 2$ vertices and with $G^{\mathrm{opp}}$ connected. 
The graph $\Gamma$ considered
above was a randomly chosen graph with $n+1$ vertices and with the property that $\Gamma^{\mathrm{opp}}$ is 
connected. 
If we show that the statement holds for $\Gamma$ than this will prove the statement by induction.
\par
We note that from Lemma \ref{lemma:T} and the assumption follows that ${\bf K}_0(T_m) = \mathbb{Z}
[P_m]_0$ and ${\bf K}_1(T_m) = 0$ for all $m \in \mathbb{N}_0$. 
Also since $\mathcal{I}_m \cong \mathcal{K}$ we have ${\bf K}_0(\mathcal{I}_m)= \mathbb{Z} [P_m Q]_0$ and 
${\bf K}_1(\mathcal{I}_m) = 0$ for all $m \in \mathbb{N}_0$. 
Finally we remind that $\pi_m$ is an isomorphism for all $m \in \mathbb{N}_0$. 
\par
From the ${\bf K}$-theory six term eact sequences for the two exact rows of (\ref{equ:3}) we have the following
commutative diagram:

\begin{equation} \label{equ:crit}
\begin{array}{ccccccccc}
{\bf K}_0(\mathcal{I}_{m-1}) & & \overset{{i'_m}_*}{\rightarrow} &  & {\bf K}_0(B_{m-1}) &  & 
   \overset{{p'_m}_*}{\rightarrow} & & {\bf K}_0(\frac{B_{m-1}}{\mathcal{I}_{m-1}}) \\ 
                   &&&&&&&& \\ 
 & \searrow {I'_m}_* & & & \downarrow {I_m}_* & & & \overset{\cong}{\swarrow} {\pi_m}_* &  \\ 
                   &&&&&&&& \\ 
 & & {\bf K}_0(T_m) & \overset{{i_m}_*}{\rightarrow} & {\bf K}_0(B_m) & \overset{{p_m}_*}{\rightarrow} & 
   {\bf K}_0(\frac{B_m}{T_m}) & & \\ 
                   &&&&&&&& \\ 
\uparrow \gamma_m^{\ind} & & \uparrow \delta_m^{\ind} & & & & \downarrow & & 
\downarrow \\ 
                   &&&&&&&& \\ 
 & & {\bf K}_1(\frac{B_m}{T_m}) & \overset{{p_m}_*}{\leftarrow} & {\bf K}_1(B_m) & \leftarrow
   & 0 & & \\ 
                   &&&&&&&& \\ 
 & \overset{\cong}{\nearrow} {\pi_m}_* & & & \uparrow {I_m}_* & & & \nwarrow &  \\ 
                   &&&&&&&& \\ 
{\bf K}_1(\frac{B_{m-1}}{\mathcal{I}_{m-1}}) & & \overset{{p'_m}_*}{\leftarrow} & & {\bf K}_1(B_{m-1}) & &
   \leftarrow & & 0,
\end{array}
\end{equation}
where $\gamma_m^{\ind}$ and $\delta_m^{\ind}$ are the index maps for the corresponding six term exact sequences.
\par
Since $\mathcal{I}_{m-1}$ is generated by $P_m Q$ from Lemma \ref{lemma:3.1} follows that the map 
${i_m}_* : {\bf K}_0(\mathcal{I}_{m-1}) \to {\bf K}_0(B_{m-1})$ is induced by 
$[P_m Q]_{{\bf K}_0(\mathcal{I}_{m-1})} \mapsto \chi(\Gamma') [P_{m-1}]_{{\bf K}_0(B_{m-1})}$. 
Also the map 
${I'_m}_* : {\bf K}_0(\mathcal{I}_{m-1}) \to {\bf K}_0(T_m)$ is induced by 
$[P_m Q]_{{\bf K}_0(\mathcal{I}_{m-1})} \mapsto \chi(\Gamma_k) [P_m]_{{\bf K}_0(T_m)}$.
\par
When we "apply" $\beta$ to equations (\ref{equ:1}) and (\ref{equ:2}) we obtain the following commutative 
diagrams with exact rows:
\begin{equation} \label{equ:b1}
 \begin{CD}
    0 @>>> \mathcal{I}_{m-1} @>{i_m'}>> B_{m-1} @>{p_m'}>> B_{m-1}/\mathcal{I}_{m-1} @>>> 0 \\
           &&    @V{\beta}VV          @V{\beta}VV             @VV{\bar{\beta}}V      &&  \\
    0 @>>> \mathcal{I}_m @>{i_{m+1}'}>> B_m @>{p_{m+1}'}>>  B_m /\mathcal{I}_m  @>>> 0
 \end{CD}
\end{equation}

and

\begin{equation} \label{equ:b2}
 \begin{CD}
    0 @>>> T_m @>{i_m}>> B_m @>{p_m}>> B_m/T_m @>>> 0 \\
           &&    @V{\beta}VV          @V{\beta}VV             @VV{\bar{\bar{\beta}}}V      &&  \\
    0 @>>> T_{m+1} @>{i_{m+1}}>> B_{m+1} @>{p_{m+1}}>> B_{m+1} /T_{m+1} @>>> 0,
 \end{CD}
\end{equation}
where $\bar{\beta}$ and $\bar{\bar{\beta}}$ are induced by $\beta$ on the above quotients. \\ 
\par
We can now start examining the five different cases depending on $\chi(\Gamma')$ and $\chi(\Gamma_k)$:
\par
(case {\bf I}): $\chi(\Gamma') = 0$ and $\chi(\Gamma_k) = 0$. 
\par
By assumption ${i'_m}_* = 0 = {I'_m}_*$. From (\ref{equ:crit}) is easy to see that $\delta_m^{\ind} = 0$.
Therefore (\ref{equ:crit}) splits into two:

\begin{equation} \label{caseI:1}
 \begin{CD}
   & & \dots @>{{i_m'}_*=0}>> {\bf K}_0(B_{m-1}) @>{{p_m'}_*}>{\cong}> {\bf K}_0(B_{m-1}/\mathcal{I}_{m-1}) 
   @>>> 0 \\
           &&&&    @V{{I_m}_*}VV          @V{{\pi_m}_*}V{\cong}V             &&  \\
    @>{\delta_m^{\ind} = 0}>>  {\bf K}_0(T_m)  @>{{i_m}_*}>> {\bf K}_0(B_m) @>{{p_m}_*}>> {\bf K}_0(B_m /T_m) 
    @>>> 0, \\ 
 \end{CD}
\end{equation}

\begin{equation} \label{caseI:2}
 \begin{CD}
   0 @>>> {\bf K}_1(B_{m-1}) @>{{p_m'}_*}>> {\bf K}_1(B_{m-1}/\mathcal{I}_{m-1}) @>{\gamma_m^{\ind}}>> 
   {\bf K}_0(\mathcal{I}_{m-1}) @>{{i'_m}_* = 0}>> \dots \\
        &&    @V{{I_m}_*}VV          @V{{\pi_m}_*}V{\cong}V             &&&  \\
   0 @>>> {\bf K}_1(B_m) @>{{p_m}_*}>{\cong}> {\bf K}_1(B_m /T_m) @>{\delta_m^{\ind} = 0}>> \dots & & &. 
 \end{CD}
\end{equation}

Suppose by induction that ${\bf K}_0(B_{m-1}) = \mathbb{Z} [P_0]_0 \oplus \dots \oplus \mathbb{Z} [P_{m-1}]_0$.
Notice that for $m=1$ we have ${\bf K}_0(B_{0}) = \mathbb{Z} [P_0]_0$. 
Then from (\ref{caseI:1}) follows that 
${\bf K}_0(B_m) = {I_m}_*({\bf K}_0(B_{m-1})) \oplus {i_m}_*({\bf K}_0(T_m))$ since all extensions of free 
abelian groups are trivial. 
Noting that ${\bf K}_0(T_m) = \mathbb{Z} [P_m]_0$ concludes the induction. 
Therefore ${\bf K}_0(B_m) = \mathbb{Z} [P_0]_0 \oplus \dots \oplus \mathbb{Z} [P_m]_0$ for each 
$m \in \mathbb{N}$. 
Notice that we can write ${\bf K}_0(B_m) = \mathbb{Z} [P_0]_0 \oplus \dots \oplus \mathbb{Z} \beta^m_*([P_0]_0)$
\par
Suppose by induction that 
\begin{multline*}
{\bf K}_1(B_{m-1}) = \mathbb{Z} ({p_1}_*)^{-1} \circ {\pi_1}_* \circ (\gamma_1^{\ind})^{-1}([P_1 Q]_0) 
\oplus \dots \\ 
\dots \oplus \mathbb{Z} ({p_{m-1}}_*)^{-1} \circ {\pi_{m-1}}_* \circ
(\gamma_{m-1}^{\ind})^{-1}([P_{m-1} Q]_0). 
\end{multline*}
This is trivially true for $m=1$. 
From (\ref{caseI:2}) we see that ${\bf K}_1(B_m) = ({p_m}_*)^{-1} \circ 
{\pi_m}_*({\bf K}_1(B_{m-1}/\mathcal{I}_{m-1}))$. 
Since all groups are free abelian all the extensions are trivial and therefore 
${\bf K}_1(B_m) = {I_m}_*({\bf K}(B_{m-1})) \oplus \mathbb{Z} ({p_m}_*)^{-1} \circ {\pi_m}_* 
\circ (\gamma_m^{\ind})^{-1}([P_m Q]_0)$. 
This concludes the induction. 
From the functoriality of the index map
and from equations (\ref{equ:b1}) and (\ref{equ:b2}) follows that $({p_m}_*)^{-1} \circ {\pi_m}_* 
\circ (\gamma_m^{\ind})^{-1}([P_m Q]_0) = ({p_m}_*)^{-1} \circ {\pi_m}_* 
\circ (\gamma_m^{\ind})^{-1} \circ \beta_*([P_{m-1}Q]_0) = \beta_* \circ ({p_{m-1}}_*)^{-1} \circ {\pi_{m-1}}_* 
\circ (\gamma_{m-1}^{\ind})^{-1}([P_{m-1}Q]_0)$. 
Therefore we can write 
\begin{multline*}
{\bf K}_1(B_m) = \mathbb{Z} ({p_1}_*)^{-1} \circ {\pi_1}_* \circ (\gamma_1^{\ind})^{-1}([P_1 Q]_0) 
\oplus \dots \\ 
\dots \oplus \mathbb{Z} \beta^{m-1}_*\circ ({p_1}_*)^{-1} \circ {\pi_1}_* \circ (\gamma_1^{\ind})^{-1}([P_1 Q]_0).
\end{multline*}
If $u \in B_1$ is a unitary with $[u]_1 = ({p_1}_*)^{-1} \circ {\pi_1}_* \circ (\gamma_1^{\ind})^{-1}([P_1 Q]_0)$
then we can write 
$${\bf K}_1(B_m) = \mathbb{Z} [u]_1 \oplus \dots \oplus \mathbb{Z} \beta^{m-1}_*([u]_1).$$

From this we get ${\bf K}_0(B) = \underset{i=0}{\overset{\infty}{\oplus}} \mathbb{Z} \beta^i_*([I]_0)$ and 
${\bf K}_1(B) = \underset{i=1}{\overset{\infty}{\oplus}} \mathbb{Z} \beta^{i-1}_*([u]_1)$. \\ 
Let $\tilde{u} = \alpha^0 \circ j_0(u) \in \tilde{B}$. 
Then it is easy to see that ${\bf K}_0(\tilde{B}) = 
\underset{i=-\infty}{\overset{\infty}{\oplus}} \mathbb{Z} \Phi^i_*([\tilde{P}_0]_0))$ and 
${\bf K}_1(\tilde{B}) = \underset{i=-\infty}{\overset{\infty}{\oplus}} \mathbb{Z} \Phi^{i-1}_*([\tilde{u}]_1)$. 
\par
The Pimsner-Voiculescu gives 
\begin{equation*}
 \begin{CD}
    \underset{i=-\infty}{\overset{\infty}{\oplus}} \mathbb{Z} \Phi^i_*([\tilde{P}_0]_0)) 
    @>{\id_* - \Phi_*}>> \underset{i=-\infty}{\overset{\infty}{\oplus}} \mathbb{Z} \Phi^i_*([\tilde{P}_0]_0)) @>>>
    {\bf K_0}(\tilde{A}) \\
                    @AAA            &&                             @VVV        \\
    {\bf K_1}(\tilde{A}) @<<< \underset{i=-\infty}{\overset{\infty}{\oplus}} \mathbb{Z} 
    \Phi^{i-1}_*([\tilde{u}]_1) 
    @<{\id_* - \Phi_*}<< \underset{i=-\infty}{\overset{\infty}{\oplus}} \mathbb{Z} \Phi^{i-1}_*([\tilde{u}]_1). \\
 \end{CD}
\end{equation*}

From this we can conclude that ${\bf K}_0(\tilde{A}) = \mathbb{Z} [\tilde{P}_0]_0$, ${\bf K}_1(\tilde{A}) =
\mathbb{Z}$. 
From Proposition \ref{prop:4} follows that ${\bf K}_0(\tilde{A}) \cong 
{\bf K}_0(C^*_Q(\Gamma)) = \mathbb{Z}$ and ${\bf K}_1(\tilde{A}) \cong 
{\bf K}_1(C^*_Q(\Gamma)) = \mathbb{Z}$ and that $[1_{C^*_Q(\Gamma)}]_0$ generates ${\bf K}_0(C^*_Q(\Gamma))$. 
\par
From Remark \ref{remark} follows that in the extension (\ref{equ:remark}) the map ${I_{\Gamma}}_*$ on ${\bf K}_0$
is zero. 
This shows that ${\bf K}_0(C^*(\Gamma)) = \mathbb{Z} [1_{C^*(\Gamma)}]_0$ and ${\bf K}_1(C^*(\Gamma)) =
0$. 
\par
This concludes the proof of (case {\bf I}).
\par
(case {\bf II}): $\chi(\Gamma') \neq 0$ and $\chi(\Gamma_k) = 0$. 
\par
By assumption ${\bf K}_0(B_0) = \mathbb{Z} [P_0]_0$, ${\bf K}_0(B_0/\mathcal{I}_0) = \mathbb{Z}_{|\chi(\Gamma')|} 
{p_1'}_*([P_0]_0)$, ${\bf K}_1(B_0) = 0$ and ${\bf K}_1(B_0/\mathcal{I}_0) = 0$. 
\par 
Suppose by induction that 
$${\bf K}_0(B_{m-1}) = \mathbb{Z} [P_{m-1}]_0 \oplus \mathbb{Z}_{|\chi(\Gamma')|}
[P_{m-2}]_0 \oplus \dots \oplus \mathbb{Z}_{|\chi(\Gamma')|} [P_0]_0,$$ 
$${\bf K}_0(B_{m-1}/\mathcal{I}_{m-1}) = 
\mathbb{Z}_{|\chi(\Gamma')|} {p_m'}_*([P_{m-1}]_0) \oplus \mathbb{Z}_{|\chi(\Gamma')|} {p_m'}_*([P_{m-2}]_0) 
\oplus \dots \oplus \mathbb{Z}_{|\chi(\Gamma')|} {p_m'}_*([P_0]_0),$$ 
${\bf K}_1(B_{m-1}) = 0$ and ${\bf K}_1(B_{m-1}/\mathcal{I}_{m-1}) = 0$. \\ 
Then from diagram (\ref{equ:crit}) immediately follows that ${\bf K}_1(B_m) = 0$ and ${\bf K}_1(B_m/T_m) = 0$. 
\par
Then (\ref{equ:crit}) reduces to the following commutative diagram with exact rows:
\begin{equation} \label{caseII}
 \begin{CD}
    0@>>> {\bf K}_0(\mathcal{I}_{m-1}) @>{{i_m'}_*}>> {\bf K}_0(B_{m-1}) @>{{p_m'}_*}>> 
    {\bf K}_0(B_{m-1}/\mathcal{I}_{m-1}) @>>> 0\\
           &&    @V{{I_m'}_* = 0}VV          @V{{I_m}_*}VV             @V{\cong}V{{\pi_m}_*}V      &&  \\
    0 @>>> {\bf K}_0(T_m) @>{{i_m}_*}>> {\bf K}_0(B_m) @>{{p_m}_*}>> {\bf K}_0(B_m/T_m) @>>> 0.
 \end{CD}
\end{equation}

From Lemma \ref{lemma:3.1} we have that $\chi(\Gamma') [P_l]_0 = 0$ in ${\bf K}_0(B_m)$ for $l=0, \dots, m-1$.
Since ${\pi_m}_*$ is an isomorphism then by the inducton hypothesis and (\ref{caseII}) it is easy to see that 
${p_m}_*$ restricted to $\mathcal{G} = \langle [P_0]_0, \dots, [P_{m-1}]_0 \rangle_{{\bf K}_0(B_m)}$ is an 
isomorphism. 
This fact also implies that there are no relations between
$[P_m]_0$ and $\mathcal{G}$ (since the bottom row of (\ref{caseII}) is exact). 
Since ${i_m}_*$ is injective then 
$[P_m]_0$ in of infinite order in ${\bf K}_0(B_m)$. 
Clearly ${\bf K}_0(B_m)$ is generated by $[P_m]_0$ and $\mathcal{G}$.  
Therefore 
$${\bf K}_0(B_m) = \mathbb{Z} [P_m]_0 \oplus \mathbb{Z}_{|\chi(\Gamma')|} [P_{m-1}]_0 \oplus \dots \oplus 
\mathbb{Z}_{|\chi(\Gamma')|} [P_0]_0.$$

From the following six term exact sequence
\begin{equation*}
 \begin{CD}
    {\bf K_0}(\mathcal{I}_m) @>{{i_m'}_*}>> {\bf K_0}(B_m) @>{{p_m'}_*}>> {\bf K_0}(B_m/\mathcal{I}_m) \\
                    @AAA            &&                             @VVV        \\
    {\bf K_1}(B_m/\mathcal{I}_m) @<{{p_m'}_*}<< {\bf K_1}(B_m) @<{{i_m'}_*}<< {\bf K_1}(\mathcal{I}_m) \\
 \end{CD}
\end{equation*}
and the fact that ${i_m'}_*$ is given by $[P_{m+1}Q]_0 \mapsto \chi(\Gamma') [P_m]_0$ we easily get that \\ 
${\bf K}_1(B_m/\mathcal{I}_m) = 0$ and that 
$${\bf K}_0(B_m/\mathcal{I}_m) = \mathbb{Z}_{|\chi(\Gamma')|} {p_{m+1}'}_*([P_m]_0) \oplus 
\mathbb{Z}_{|\chi(\Gamma')|} {p_{m+1}'}_*([P_{m-1}]_0) \oplus \dots \oplus 
\mathbb{Z}_{|\chi(\Gamma')|} {p_{m+1}'}_*([P_0]_0).$$
This completes the induction.
\par
We have ${\bf K}_0(B) = \underset{i=0}{\overset{\infty}{\oplus}} \mathbb{Z}_{|\chi(\Gamma')|} [P_i]_0$ 
and ${\bf K}_1(B) = 0$. 
We can write 
${\bf K}_0(B) = \underset{i=0}{\overset{\infty}{\oplus}} \mathbb{Z}_{|\chi(\Gamma')|} \beta^i_*([P_0]_0)$. 
Therefore ${\bf K}_0(\tilde{B}) = \underset{i=-\infty}{\overset{\infty}{\oplus}} \mathbb{Z}_{|\chi(\Gamma')|}
\Phi^i_*([\tilde{P}_0]_0)$ and ${\bf K}_1(\tilde{B}) = 0$. 
\par
The Pimsner-Voiculescu exact sequence gives
\begin{equation*}
 \begin{CD}
    \underset{i=-\infty}{\overset{\infty}{\oplus}} \mathbb{Z}_{|\chi(\Gamma')|} \Phi^i_*([\tilde{P}_0]_0)) 
    @>{\id_* - \Phi_*}>> \underset{i=-\infty}{\overset{\infty}{\oplus}} \mathbb{Z}_{|\chi(\Gamma')|} 
    \Phi^i_*([\tilde{P}_0]_0)) @>>> {\bf K_0}(\tilde{A}) \\
                    @AAA            &&                             @VVV        \\
    {\bf K_1}(\tilde{A}) @<<< 0 @<<< 0. \\
 \end{CD}
\end{equation*}
We conclude that ${\bf K}_0(\tilde{A}) = \mathbb{Z}_{|\chi(\Gamma')|} [\tilde{P}_0]_0$ and ${\bf K}_1(\tilde{A})
= 0$. 
From Proposition \ref{prop:4} we get ${\bf K}_0(C^*_Q(\Gamma)) = \mathbb{Z}_{|\chi(\Gamma)|}
[1_{C^*_Q(\Gamma)}]_0$ and ${\bf K}_1(C^*_Q(\Gamma)) = 0$ (notice that $\chi(\Gamma) = \chi(\Gamma') -
\chi(\Gamma_k) = \chi(\Gamma') - 0$). 
\par
From Remark \ref{remark} follows that ${I_{\Gamma}}_*$ is "multiplication by $\chi(\Gamma)$", so 
${\bf K}_0(C^*(\Gamma)) = \mathbb{Z} [1_{C^*(\Gamma)}]_0$ and ${\bf K}_1(C^*(\Gamma)) = 0$. 
\par
This concludes the proof of (Case {\bf II}).
\par
(case {\bf III}): $\chi(\Gamma') = 0$ and $\chi(\Gamma_k) \neq 0$. 
\par
By assumption we have that ${\bf K}_0(B_0) = \mathbb{Z} [P_0]_0$, ${\bf K}_0(B_0/\mathcal{I}_0) = \mathbb{Z}
{p_1'}_*([P_0]_0)$, ${\bf K}_1(B_0) = 0$ and ${\bf K}_1(B_0/\mathcal{I}_0) = \mathbb{Z}$. 
\par
Suppose by induction that 
$${\bf K}_0(B_{m-1}) = \mathbb{Z} [P_0]_0 \oplus \mathbb{Z}_{|\chi(\Gamma_k)|} [P_1]_0 \oplus \dots \oplus 
\mathbb{Z}_{|\chi(\Gamma_k)|} [P_{m-1}]_0$$ 
and that ${\bf K}_1(B_{m-1}) = 0$. 
Then from diagram (\ref{equ:crit}) we see that the maps $\gamma_m^{\ind}$ and ${p_m'}_*: {\bf K}_0(B_{m-1}) \to
{\bf K}_0(B_{m-1}/\mathcal{I}_{m-1})$ are isomorphisms. 
Since also $\pi_m$ is an isomorphism this implies that
${I_m}_*: {\bf K}_0(B_{m-1}) \to {\bf K}_0(B_m)$ is an isomorphism into and that ${p_m}_*$ restricted to 
$\mathcal{G} = \langle [P_0]_0, \dots, [P_{m-1}]_0 \rangle_{{\bf K}_0(B_m)}$ is also an
isomorphism. 
From the fact that ${p_m}_*|_{\mathcal{G}}$ is injective follows that there are no relations 
between $[P_m]_0$ and $\mathcal{G}$. \\ 
The commutativity of 
\begin{equation*}
 \begin{CD}
   {\bf K}_1(B_{m-1}/\mathcal{I}_{m-1}) @>{\gamma_m^{\ind}}>{\cong}> {\bf K}_0(\mathcal{I}_{m-1}) \\
       @V{{\pi_m}_*}V{\cong}V               @V{{I_m}_*}V{[= \times \chi(\Gamma_k)]}V              \\
   {\bf K}_1(B_m /T_m) @>{\delta_m^{\ind}}>> {\bf K}_0(T_m) 
 \end{CD}
\end{equation*}
implies that $\delta_m^{\ind}$ is "multiplication by $\chi(\Gamma_k)$". 
Thus $[P_m]_0$ in ${\bf K}_0(B_m)$ is of
order $\chi(\Gamma_k)$ (as should be by Lemma \ref{lemma:3.1}). 
Therefore
$${\bf K}_0(B_m) = \mathbb{Z} [P_0]_0 \oplus \mathbb{Z}_{|\chi(\Gamma_k)|} [P_1]_0 \oplus \dots \oplus 
\mathbb{Z}_{|\chi(\Gamma_k)|} [P_m]_0.$$
We also showed that $\delta_m^{\ind}$ is injective and therefore ${\bf K}_1(B_m) = 0$. 
\par
Now we easily get ${\bf K}_0(B) = \mathbb{Z} [P_0]_0 \oplus \underset{i=1}{\overset{\infty}{\oplus}} 
\mathbb{Z}_{|\chi(\Gamma_k)|}
\beta^i_*([P_0]_0)$ and ${\bf K}_1(B) = 0$. 
From this follows that ${\bf K}_0(\tilde{B}) = 
\underset{i=-\infty}{\overset{\infty}{\oplus}} \mathbb{Z}_{|\chi(\Gamma_k)|} \Phi^i_*([\tilde{P}_0]_0)$
and ${\bf K}_1(\tilde{B}) = 0$. 
\par
The Pimsner-Voiculescu exact sequence gives
\begin{equation*}
 \begin{CD}
    \underset{i=-\infty}{\overset{\infty}{\oplus}} \mathbb{Z}_{|\chi(\Gamma_k)|} \Phi^i_*([\tilde{P}_0]_0)) 
    @>{\id_* - \Phi_*}>> \underset{i=-\infty}{\overset{\infty}{\oplus}} \mathbb{Z}_{|\chi(\Gamma_k)|} 
    \Phi^i_*([\tilde{P}_0]_0)) @>>> {\bf K_0}(\tilde{A}) \\
                    @AAA            &&                             @VVV        \\
    {\bf K_1}(\tilde{A}) @<<< 0 @<<< 0. \\
 \end{CD}
\end{equation*}
We conclude that ${\bf K}_0(\tilde{A}) = \mathbb{Z}_{|\chi(\Gamma_k)|} [\tilde{P}_0]_0$ and ${\bf K}_1(\tilde{A})
= 0$. 
From Proposition \ref{prop:4} we get ${\bf K}_0(C^*_Q(\Gamma)) = \mathbb{Z}_{|\chi(\Gamma)|}
[1_{C^*_Q(\Gamma)}]_0$ and ${\bf K}_1(C^*_Q(\Gamma)) = 0$ (notice that $\chi(\Gamma) = \chi(\Gamma') -
\chi(\Gamma_k) = 0 - \chi(\Gamma_k)$).
\par
From Remark \ref{remark} follows that ${I_{\Gamma}}_*$ is "multiplication by $\chi(\Gamma)$", so 
${\bf K}_0(C^*(\Gamma)) = \mathbb{Z} [1_{C^*(\Gamma)}]_0$ and ${\bf K}_1(C^*(\Gamma)) = 0$. 
\par
This concludes the proof of (Case {\bf III}).
\par
(cases {\bf IV} and {\bf V}): $\chi(\Gamma') \neq 0$, $\chi(\Gamma_k) \neq 0$.
\par 
Let's denote $x = \chi(\Gamma')$, $y = \chi(\Gamma_k)$ and let $\GCD(x,y) = d > 0$ be the greatest common divisor 
of $x$ and $y$. 
Then by the B$\acute{e}$zout's identity there exist $a,b \in \mathbb{Z}$ such that $ax + by = d$.
Denote also $x' = x/d$ and $y' = y/d$. 
Then $ax' + by' = 1$. 
\par
By assumption we have that ${\bf K}_0(B_0) = \mathbb{Z} [P_0]_0$, ${\bf K}_1(B_0/\mathcal{I}_0) = 0$, 
${\bf K}_1(B_0) = 0$ and ${\bf K}_0(B_0/\mathcal{I}_0) = \mathbb{Z}_{|x|} {p_1'}_*([P_0]_0)$. 
\par
Then $0 = {\bf K}_1(B_0/\mathcal{I}_0) \cong {\bf K}_1(B_1/T_1)$ which implies ${\bf K}_1(B_1) = 0$.
Diagram (\ref{equ:crit}) for $m=1$ reduces to 
\begin{equation} \label{caseV:1}
 \begin{CD}
    0@>>> {\bf K}_0(\mathcal{I}_0) @>{{i_1'}_*}>> {\bf K}_0(B_0) @>{{p_1'}_*}>> 
    {\bf K}_0(B_0/\mathcal{I}_0) @>>> 0\\
           &&    @V{{I_1'}_*}VV          @V{{I_1}_*}VV             @V{\cong}V{{\pi_1}_*}V      &&  \\
    0 @>>> {\bf K}_0(T_1) @>{{i_1}_*}>> {\bf K}_0(B_1) @>{{p_1}_*}>> {\bf K}_0(B_1/T_1) @>>> 0.
 \end{CD}
\end{equation}

Clearly in ${\bf K}_0(B_1)$ we have $x[P_0]_0 - y[P_1]_0 = 0$. 
Consider $g = b[P_0]_0 + a[P_1]_0,\ 
g' = x'[P_0]_0 - y'[P_1]_0 \in {\bf K}_0(B_1)$. 
Since $a g' + y' g = (a x'+y' b)[P_0]_0 = [P_0]_0$ and $x' g-b g' =
(x' a+b y')[P_1]_0 = [P_1]_0$ it follows that $g$ and $g'$ generate ${\bf K}_0(B_1)$. 
Since ${i_1}_*$ is injective 
on ${\bf K}_0$ it follows that ${\bf K}_0(B_1)$ is an infinite group. 
Clearly 
$dg' = dx'[P_0]_0 - dy'[P_1]_0 = 0$. 
Therefore $g$ is of infinite order in ${\bf K}_0(B_1)$ and moreover $g$ and
$g'$ are not related (or otherwise $g$ would be of finite order). 
If we suppose that
$0 < d'\ |\ d$ and $d' g' = 0$ then it will follow that $d'x'[P_0]_0 = d' y' [P_1]_0 \in \ker({p_1}_*)$. 
But the order of ${p_1}_*([P_0]_0)$ in ${\bf K}_0(B_1/T_1)$ is $|x|$, so $d' x' \geq x$ or $d' \geq d$. 
Therefore $d' = d$ and ${\bf K}_0(B_1) =
\mathbb{Z} (b[P_0]_0 + a[P_1]_0) \oplus \mathbb{Z}_d (x'[P_0]_0 - y'[P_1]_0)$. 
\par
Therefore we showed that ${\bf K}_0(B_1) = \{ \mathbb{Z} [P_0]_0 \oplus \mathbb{Z} 
[P_1]_0 | x[P_0]_0 - y[P_1]_0 = 0 \}$. 
Suppose by induction that for $m \geq 2$, ${\bf K}_1(B_{m-1}) = 0$ and that 
$${\bf K}_0(B_{m-1})  = \{ \mathbb{Z} [P_0]_0 \oplus \dots \oplus \mathbb{Z} [P_{m-1}]_0 | 
x[P_0]_0 - y[P_1]_0 = 0, 
\dots, x[P_{m-2}]_0 - y[P_{m-1}]_0 = 0 \}.$$

From the induction hypothesis follows that $[P_{m-1}]_0$ is of infinite order in ${\bf K}_0(B_{m-1})$ and
therefore that ${i_m'}_*: {\bf K}_0(\mathcal{I}_{m-1}) \to {\bf K}_0(B_{m-1})$ is injective and therfore $0 = 
{\bf K}_1(B_{m-1}/\mathcal{I}_{m-1}) \cong {\bf K}_1(B_m/T_m)$. 
This shows that ${\bf K}_1(B_m) = 0$ and that 
(\ref{equ:crit}) reduces to 
\begin{equation} \label{caseV:m}
 \begin{CD}
    0@>>> {\bf K}_0(\mathcal{I}_{m-1}) @>{{i_m'}_*}>> {\bf K}_0(B_{m-1}) @>{{p_m'}_*}>> 
    {\bf K}_0(B_{m-1}/\mathcal{I}_{m-1}) @>>> 0\\
           &&    @V{{I_m'}_*}VV          @V{{I_m}_*}VV             @V{\cong}V{{\pi_m}_*}V      &&  \\
    0 @>>> {\bf K}_0(T_m) @>{{i_m}_*}>> {\bf K}_0(B_m) @>{{p_m}_*}>> {\bf K}_0(B_m/T_m) @>>> 0.
 \end{CD}
\end{equation} 

It is easy to see that 
\begin{multline*}
{\bf K}_0(B_{m-1}/\mathcal{I}_{m-1}) = \{\mathbb{Z} {p_m'}_*([P_{m-1}]_0) \oplus \dots \oplus \mathbb{Z}
{p_m'}_*([P_0]_0) | x{p_m'}_*([P_{m-1}]_0) = 0, \\ 
x{p_m'}_*([P_{m-2}]_0) - y{p_m'}_*([P_{m-1}]_0) = 0, \dots, x{p_m'}_*([P_0]_0) - y{p_m'}_*([P_1]_0) = 0 \}.
\end{multline*}
Since ${I_m'}_*$ is "multiplication by $\chi(\Gamma_k)$" and therefore injective then by the Five Lemma follows 
that ${I_m}_*$ is also injective. 
Therefore if we denote $\mathcal{G} = {I_m}_*({\bf K}_0(B_{m-1}))$ then 
${\bf K}_0(B_m) = \langle [P_m]_0, \mathcal{G} \rangle$. 
One obvious relation in ${\bf K}_0(B_m)$ beside the 
relations that come from ${\bf K}_0(B_{m-1})$ is $x[P_{m-1}]_0 - y[P_m]_0 = 0$ and this relation follows from 
Lemma \ref{lemma:3.1}. \\ 
Therefore ${\bf K}_0(B_m)$ is a quotient of the group  
$$F= \{\mathbb{Z} \rho_0 \oplus \dots \oplus \mathbb{Z} \rho_m | x\rho_{m-1} - y\rho_m = 0, \dots, 
x\rho_0 - y\rho_1 = 0 \},$$
where the quotient map $f : F \to {\bf K}_0(B_m)$ is defined on the generators as $\rho_l \mapsto [P_l]_0$, $l =
0, \dots, m$.  
Then if $F' = \mathbb{Z} \rho_m$ the quotient $F_q = F/F'$ is isomorphis to 
\begin{multline*}
F_q = \{ \mathbb{Z} \rho_0 \oplus \dots \oplus \mathbb{Z} \rho_m | x\rho_{m-1} - y\rho_m = 0, \dots, 
x\rho_0 - y\rho_1 = 0, \rho_m = 0 \} = \\ 
= \{ \mathbb{Z} \rho_0 \oplus \dots \oplus \mathbb{Z} \rho_{m-1} | x\rho_{m-1} = 0, x\rho_{m-2} - y\rho_{m-1} =
0, \dots, x\rho_0 - y\rho_1 = 0 \}.
\end{multline*}

Obviously we have the commutative diagram of abelian groups with exact rows
\begin{equation*} 
 \begin{CD}
    0 @>>> F' @>>> F @>>> F_q @>>> 0 \\ 
      &&    @V{f'}VV          @V{f}VV             @V{f_q}VV      &&  \\
    0 @>>> {\bf K}_0(T_m) @>{{i_m}_*}>> {\bf K}_0(B_m) @>{{p_m}_*}>> {\bf K}_0(B_m/T_m) @>>> 0,
 \end{CD}
\end{equation*}
where $f_q$ is the homomorphism induced by $f$ and $f'$ is the restriction of $f$ to $F'$. 
Then obviously $f'$
and $f_q$ are isomorphisms (since ${\pi_m}_*$ is an isomorphism). 
Therefore by the Five Lemma follows that $f$ is also an isomorphism. 
\par
This shows that 
$${\bf K}_0(B_m) = \{ \mathbb{Z} [P_0]_0 \oplus \dots \oplus \mathbb{Z} [P_m]_0 | 
x[P_0]_0 - y[P_1]_0 = 0, \dots, x[P_{m-1}]_0 - y[P_m]_0 = 0 \}.$$
We also showed above that ${\bf K}_1(B_m) = 0$ and this concludes the induction. 
\par
Now it is easy to see that ${\bf K}_1(B) = 0$ and that 
$${\bf K}_0(B) = \{ \underset{i=0}{\overset{\infty}{\oplus}} \mathbb{Z} \beta^i_*([P_0]_0) | \chi(\Gamma')
\beta^i_*([P_0]_0) - \chi(\Gamma_k) \beta^{i+1}_*([P_0]_0) = 0,\ i \in \mathbb{N}_0 \}.$$
Then ${\bf K}_1(\tilde{B}) = 0$ and 
$${\bf K}_0(\tilde{B}) = \{ \underset{i=-\infty}{\overset{\infty}{\oplus}} \mathbb{Z} \Phi^i_*([\tilde{P}_0]_0) | 
\chi(\Gamma') \Phi^i_*([\tilde{P}_0]_0) - \chi(\Gamma_k) \Phi^{i+1}_*([\tilde{P}_0]_0) = 0,\ i \in 
\mathbb{Z} \}.$$ 

The Pimsner-Voiculescu exact sequence gives
\begin{equation} \label{cases45}
 \begin{CD}
    {\bf K}_0(\tilde{B}) 
    @>{\id_* - \Phi_*}>> 
    {\bf K}_0(\tilde{B}) @>>> {\bf K_0}(\tilde{A}) \\
                    @AAA            &&                             @VVV        \\
    {\bf K_1}(\tilde{A}) @<<< 0 @<<< 0. \\
 \end{CD}
\end{equation}

(Case {\bf IV}): $\chi(\Gamma') \neq 0$, $\chi(\Gamma_k) \neq 0$ and $\chi(\Gamma') = \chi(\Gamma_k)$.
\par
In this case 
\begin{multline*}
{\bf K}_0(\tilde{A}) = \{ \underset{i=-\infty}{\overset{\infty}{\oplus}} \mathbb{Z} \Phi^i_*([\tilde{P}_0]_0) | 
\chi(\Gamma') \Phi^i_*([\tilde{P}_0]_0) - \chi(\Gamma') \Phi^{i+1}_*([\tilde{P}_0]_0) = 0,\\  
\Phi^i_*([\tilde{P}_0]_0) - \Phi^{i+1}_*([\tilde{P}_0]_0) = 0,\ i \in \mathbb{Z}  \} = \\ 
\{ \underset{i=-\infty}{\overset{\infty}{\oplus}} \mathbb{Z} \Phi^i_*([\tilde{P}_0]_0) |   
\Phi^i_*([\tilde{P}_0]_0) - \Phi^{i+1}_*([\tilde{P}_0]_0) = 0,\ i \in \mathbb{Z}  \} = 
\mathbb{Z} [\tilde{P}_0]_0. 
\end{multline*}

To examine $\ker(\id_* - \Phi_*)$ take $\omega = \underset{i=-j}{\overset{j}{\Sigma}} t_i 
\Phi^i_*([\tilde{P}_0]_0) \in \ker(\id_* - \Phi_*)$, where $t_i \in \mathbb{Z}$. 
Then
$$0 = (\id_* - \Phi_*)(\omega) = \underset{i=-j}{\overset{j}{\Sigma}} t_i 
(\id_* - \Phi_*)(\Phi^i_*([\tilde{P}_0]_0)) =
\underset{i=-j}{\overset{j}{\Sigma}} t_i (\Phi^i_*([\tilde{P}_0]_0) - \Phi^{i+1}_*([\tilde{P}_0]_0)).$$
Therefore $t_i = s_i|\chi(\Gamma')|$ for some integers $s_i$, $i=-j, \dots, j$. 
From this easily follows that
$\omega = \underset{i=-j}{\overset{j}{\Sigma}} s_i|\chi(\Gamma')| [\tilde{P}_0]_0.$ 
Thus $\ker(\id_* - \Phi_*) =
|\chi(\Gamma')| \mathbb{Z} [\tilde{P}_0]_0$. 
This shows that ${\bf K}_1(\tilde{A}) = \mathbb{Z}$.
\par
From Proposition \ref{prop:4} follows that ${\bf K}_0(\tilde{A}) \cong 
{\bf K}_0(C^*_Q(\Gamma)) = \mathbb{Z}$, ${\bf K}_1(\tilde{A}) \cong 
{\bf K}_1(C^*_Q(\Gamma)) = \mathbb{Z}$ and that $[1_{C^*_Q(\Gamma)}]_0$ generates ${\bf K}_0(C^*_Q(\Gamma))$. 
 \par
From Remark \ref{remark} follows that in the extension (\ref{equ:remark}) the map ${I_{\Gamma}}_*$ on ${\bf K}_0$
is zero (since $\chi(\Gamma) = \chi(\Gamma') - \chi(\Gamma_k) = 0$). 
Therefore ${\bf K}_0(C^*(\Gamma)) = \mathbb{Z} [1_{C^*(\Gamma)}]_0$ and ${\bf K}_1(C^*(\Gamma)) = 0$. 
\par
This concludes the proof of (case {\bf IV}).
\par
(case {\bf V}): $\chi(\Gamma') \neq 0$, $\chi(\Gamma_k) \neq 0$ and $\chi(\Gamma') \neq \chi(\Gamma_k)$. 
\par
In this case  
\begin{multline*}
{\bf K}_0(\tilde{A}) = \{ \underset{i=-\infty}{\overset{\infty}{\oplus}} \mathbb{Z} \Phi^i_*([\tilde{P}_0]_0) | 
\chi(\Gamma') \Phi^i_*([\tilde{P}_0]_0) - \chi(\Gamma_k) \Phi^{i+1}_*([\tilde{P}_0]_0) = 0,\\  
\Phi^i_*([\tilde{P}_0]_0) - \Phi^{i+1}_*([\tilde{P}_0]_0) = 0,\ i \in \mathbb{Z}  \} = \\ 
\{ \mathbb{Z} [\tilde{P}_0]_0 | \chi(\Gamma') [\tilde{P}_0]_0 - \chi(\Gamma_k) [\tilde{P}_0]_0 = 0 \} =
\mathbb{Z}_{|\chi(\Gamma') - \chi(\Gamma_k)|} [\tilde{P}_0]_0 = \mathbb{Z}_{|\chi(\Gamma)|} [\tilde{P}_0]_0. 
\end{multline*}

We only need to show that ${\bf K}_1(\tilde{A}) = 0$ or that $\id_* - \Phi_*$ is injective. \\ 
Take $\omega = \underset{i=-j}{\overset{j}{\Sigma}} t_j \Phi^i_*([\tilde{P}_0]_0)$, $t_i \in \mathbb{Z}$ and 
suppose that $(\id_* - \Phi_*)(\omega) = 0$. 
Then
\begin{multline*}
0 = (\id_* - \Phi_*)(\omega) = \underset{i=-j}{\overset{j}{\Sigma}} t_j (\Phi^i_*([\tilde{P}_0]_0) - 
\Phi^{i+1}_*([\tilde{P}_0]_0)) = \\ 
 = t_{-j} \Phi^{-j}_*([\tilde{P}_0]_0) + \underset{i=-j+1}{\overset{j}{\Sigma}}
(t_i - t_{i-1}) \Phi^i_*([\tilde{P}_0]_0) - t_j \Phi^{j+1}_*([\tilde{P}_0]_0). 
\end{multline*}
If $\chi(\Gamma')$ doesn't divide $t_{-j}$ then the equality $-t_{-j} \Phi^{-j}_*([\tilde{P}_0]_0) = (t_i -
t_{i-1}) \Phi^i_*([\tilde{P}_0]_0) - t_j \Phi^{j+1}_*([\tilde{P}_0]_0)$ is impossible. 
If $\chi(\Gamma')$ divides
$t_{-j}$ then $\omega$ can be expressed in terms of $\Phi^{-j+1}_*([\tilde{P}_0]_0), \dots, 
\Phi^j_*([\tilde{P}_0]_0)$. 
By induction we see that we can write $\omega = t [\tilde{P}_0]_0$ for some $t \in
\mathbb{Z}$. But then clearly $(\id_* - \Phi_*)(\omega) = 0$ is possible if and only if $t=0$. 
This shows that $\id_* - \Phi_*$ is injective and therefore that ${\bf K}_1(\tilde{A}) = 0$. 
\par
From Proposition \ref{prop:4} we get ${\bf K}_0(C^*_Q(\Gamma)) = \mathbb{Z}_{|\chi(\Gamma)|}
[1_{C^*_Q(\Gamma)}]_0$ and ${\bf K}_1(C^*_Q(\Gamma)) = 0$. 
\par
From Remark \ref{remark} follows that ${I_{\Gamma}}_*$ is "multiplication by $\chi(\Gamma)$", so 
${\bf K}_0(C^*(\Gamma)) = \mathbb{Z} [1_{C^*(\Gamma)}]_0$ and ${\bf K}_1(C^*(\Gamma)) = 0$. 
\par
This concludes the proof of (Case {\bf V}). 
\par
The Proposition is proved.
\end{proof}
 
Now we can apply the Kirchberg-Phillips Classification theorem (\cite{Ph00}) to $C^*_Q(G)$ for a finite graph
$G$ such that $G^{\mathrm{opp}}$ is connected and with at least two vertices, using Theorem \ref{thm:1}, 
Proposition 
\ref{prop:4} and Proposition \ref{prop:main}. 
We obtain
\begin{equation}
C^*_Q(G) \cong \mathcal{O}_{1+|\chi(G)|}. 
\end{equation}

For infinite graphs with connected opposite graphs we can argue similarly as in \cite[Corollary 3.11]{C81} to 
prove the following:

\begin{prop} \label{prop:infty}
Let $G$ an infinite graph with countably many vertices and such that $G^{\mathrm{opp}}$ is connected. 
Then $C^*(G)$ ($=C^*_Q(G)$) is nuclear and belongs to the small bootstrap class. Moreover 
${\bf K}_0(C^*(G)) = \mathbb{Z} [1_{C^*(G)}]_0$ and ${\bf K}_1(C^*(G)) = 0$. 
\end{prop}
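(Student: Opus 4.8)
The plan is to realize $C^*(G)$ as an inductive limit of the finite-graph Toeplitz algebras treated in Proposition \ref{prop:main} and then to pass to the limit in $K$-theory and in the permanence properties of the bootstrap class $\mathfrak{N}$. First I would record that $C^*(G) = C^*_Q(G)$: since $G^{\mathrm{opp}}$ is connected and infinite it has \emph{no} finite connected component, so the relations (4) of Theorem \ref{thm:1} are vacuous and the universal algebra already agrees with its boundary quotient. Next I would exhaust $G$ by finite induced subgraphs. Enumerating the vertices $S = \{ v_1, v_2, \dots \}$ and using that $G^{\mathrm{opp}}$ is connected, any finite set of vertices can be enlarged to a finite set $S_n$ whose induced subgraph in $G^{\mathrm{opp}}$ is connected, by adjoining the interior vertices of paths in $G^{\mathrm{opp}}$ joining the chosen vertices. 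Arranging $S_1 \subset S_2 \subset \cdots$ with $\bigcup_n S_n = S$, $\card(S_n) \geq 2$, and each $G_n^{\mathrm{opp}}$ connected, I obtain finite graphs $G_n$ to which Proposition \ref{prop:main} applies.

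The central step is to prove that the obvious map $C^*(G_n) \to C^*(G_{n+1})$ sending generators to generators is an \emph{injective} unital $*$-homomorphism, so that $C^*(G) = \varinjlim C^*(G_n)$. Inside $C^*(G_{n+1})$ the isometries $\{ V_s \mid s \in S_n \}$ satisfy relations (1)--(3) for $G_n$, hence induce a $*$-homomorphism out of the universal algebra $C^*(G_n)$ by its universal property. To get injectivity I would invoke the Crisp--Laca uniqueness theorem (\cite[Theorem 24]{CL02}, implicit in Theorem \ref{thm:2}): a family of such isometries generates a faithful copy of $C^*(G_n)$ provided $\prod_{s \in S_\lambda}(I - V_s V_s^*) \neq 0$ for every $S_\lambda$ spanning a finite connected component of $G_n^{\mathrm{opp}}$. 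Since $G_n^{\mathrm{opp}}$ is connected the only such set is $S_n$ itself, and $\prod_{s \in S_n}(I - V_s V_s^*) \neq 0$ in $C^*(G_{n+1})$: under the faithful representation $\pi_{G_{n+1}}$ each $V_s^*$ annihilates the vacuum $\mathfrak{E}[\emptyset]$, so the product fixes $\mathfrak{E}[\emptyset]$ and is nonzero (equivalently, $S_n$ is a proper subset of the connected $G_{n+1}^{\mathrm{opp}}$, so it spans no connected component and relation (R) of Theorem \ref{thm:2} cannot force vanishing). The same argument embeds each $C^*(G_n)$ unitally in $C^*(G)$, and as $\bigcup_n \{ V_s \mid s \in S_n \}$ generates $C^*(G)$ the union of the images is dense; hence $C^*(G) = \varinjlim (C^*(G_n), \iota_n)$ with unital injective connecting maps.

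To finish, I would note that for each finite $G_n$ the extension $0 \to \mathcal{K} \to C^*(G_n) \to C^*_Q(G_n) \to 0$ (from the compact-ideal description in the introduction) together with Proposition \ref{prop:4} and property (v) of $\mathfrak{N}$ shows $C^*(G_n)$ is nuclear and lies in $\mathfrak{N}$, since both $\mathcal{K}$ and $C^*_Q(G_n)$ do. As $\mathfrak{N}$ is closed under inductive limits and nuclearity passes to limits, $C^*(G)$ is nuclear and belongs to $\mathfrak{N}$. By continuity of $K$-theory, ${\bf K}_*(C^*(G)) = \varinjlim {\bf K}_*(C^*(G_n))$; Proposition \ref{prop:main} gives ${\bf K}_0(C^*(G_n)) = \mathbb{Z}[1_{C^*(G_n)}]_0$ and ${\bf K}_1(C^*(G_n)) = 0$, and because each $\iota_n$ is unital it carries the generator $[1_{C^*(G_n)}]_0$ to the generator $[1_{C^*(G_{n+1})}]_0$. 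Thus every connecting map on ${\bf K}_0$ is an isomorphism and $\varinjlim$ yields ${\bf K}_0(C^*(G)) = \mathbb{Z}[1_{C^*(G)}]_0$, while ${\bf K}_1(C^*(G)) = \varinjlim 0 = 0$.

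The \textbf{main obstacle} is precisely the injectivity in the central step: verifying through the uniqueness theorem and the nonvanishing of the diagonal projection that each finite-subgraph algebra embeds as the \emph{full} Toeplitz algebra $C^*(G_n)$ rather than collapsing onto a proper quotient. This is exactly where I use the connectedness of every $G_n^{\mathrm{opp}}$ and the properness of the finite set $S_n$ inside the connected $G_{n+1}^{\mathrm{opp}}$; the remaining combinatorial exhaustion and the $K$-theoretic bookkeeping are routine once this embedding is established.
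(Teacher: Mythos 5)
Your proposal is correct and follows essentially the same route as the paper: exhaust $G$ by finite induced subgraphs $G_n$ with $G_n^{\mathrm{opp}}$ connected, apply Proposition \ref{prop:main} to each, and pass to the inductive limit for nuclearity, the bootstrap class, and $K$-theory. The only difference is that you carefully justify the step the paper dismisses with ``it is easy to see that $C^*(G)=\varinjlim C^*(G_n)$'' — namely the injectivity of the connecting maps via the Crisp--Laca uniqueness theorem and the nonvanishing of $\prod_{s\in S_n}(I-V_sV_s^*)$ on the vacuum vector — which is a welcome addition rather than a departure.
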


\begin{proof}
By induction we will find a increasing sequence $G_n$ of subgraphs of $G$ with $n$ 
vertices, $n \geq 2$ which are such that $G_n^{\mathrm{opp}}$ is connected for each $n \geq 2$ and also 
$G_n \overset{n \to \infty}{\longrightarrow} G$. 
Obviously we can find two vertices $v_1$ and $v_2$ that are not connected (since $G^{\mathrm{opp}}$ is conected). 
Then we chose $G_2$ to be the graph with vertices $v_1$ and $v_2$ and no edges. 
Suppose we have defined the subgraph $G_n$ for some $n \geq 2$. 
Let $v_1, \dots, v_n$ be the vertices of $G_n$. 
Since $G^{\mathrm{opp}}$ is connected we can find a vertex $v_{n+1}$
of $G$ different from $v_1, \dots, v_n$ such that $v_{n+1}$ is not connected with all of the vertices $v_1,
\dots, v_n$. 
Then obviously the subgraph $G_{n+1}$ of $G$ on vertices $v_1, \dots, v_{n+1}$ and edges comming
from $G$ is such that $G_{n+1}^{\mathrm{opp}}$ is connected. 
This completes the induction. 
\par 
From Proposition \ref{prop:main} we have ${\bf K}_0(C^*(G_n)) = \mathbb{Z} [1_{C^*(G_n)}]_0$ and ${\bf
K}_1(C^*(G_n)) = 0$. It is easy to see that $C^*(G) = \underset{\longrightarrow}{\lim} C^*(G_n)$.
Therefore from Proposition \ref{prop:4} we get that $C^*_Q(G)$ is nuclear and belongs to the small 
bootstrap category $\mathfrak{N}$. 
Also ${\bf K}_0(C^*(G)) = \underset{n \to \infty}{\lim} {\bf K}_0(C^*(G_n)) = 
\mathbb{Z} [1_{C^*(G)}]_0$ and ${\bf K}_1(C^*(G)) = \underset{n \to \infty}{\lim} {\bf K}_0(C^*(G_n)) = 0$. 
\par
This proves the proposition.
\end{proof}

From Theorem \ref{thm:1} we know that $C^*(G) = C^*_Q(G)$ is purely infinite and simple. 
Again using Kirchberg-Phillips theorem we get that if $G$ is an infinite graph on countably many vertices
such that $G^{\mathrm{opp}}$ is connected then $C^*(G) = C^*_Q(G) \cong \mathcal{O}_{\infty}$. 
If we define for an infinite
countable graph $G$ with $G^{\mathrm{opp}}$ conected $\chi(G) \overset{def}{=} \infty$ then we can write once 
again $C^*_Q(G) \cong \mathcal{O}_{1+|\chi(G)|}$. 

\begin{remark} \label{tensor}
Let $G_1$ and $G_2$ be two disjoint graphs. 
Then by $G_1 * G_2$ we denote their join which is the graph obtained from $G_1$ and $G_2$
by connecting each vertex of $G_1$ with each vertex of $G_2$. 
Then if we start with a graph $G$ on countably many vertices which is such that $G^{\mathrm{opp}}$ doesn't 
have any isolated vertices then we can find a sequence of subraphs
$G_n$, $n \in \mathbb{N}$ (some of $G_n$'s can have zero vertices) such that $G_n^{\mathrm{opp}}$ are all 
connected and such that $G = \underset{n=1}{\overset{\infty}{*}} G_n$. 
For a graph $F$ with zero vertices we write $C^*_Q(F) = \mathbb{C}$. 
\par
Then from Theorem \ref{thm:1} easily follows that $C^*_Q(G) = \underset{n=1}{\overset{\infty}{\otimes}}
C^*_Q(G_n)$. 
\end{remark}

Now we can record our main result:

\begin{thm} 
Let $G$ be a graph with at least two and at most countably many vertices such that $G^{\mathrm{opp}}$ has 
no isolated vertices. 
Write $G = \underset{n=1}{\overset{\infty}{*}} G_n$ as in Remark \ref{tensor} with $G_n$ being a
subgraph of $G$ such that $G_n^{\mathrm{opp}}$ is connected. 
\par
Then
\begin{equation}
C^*_Q(G) = \underset{n=1}{\overset{\infty}{\otimes}} C^*_Q(G_n) \cong
\underset{n=1}{\overset{\infty}{\otimes}} \mathcal{O}_{1+|\chi(G_n)|}.
\end{equation}
\end{thm}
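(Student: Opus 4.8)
The plan is to assemble the theorem from the join decomposition of Remark \ref{tensor} together with the component-wise identifications already established for graphs with connected opposite. First I would invoke Remark \ref{tensor} directly: since $G^{\mathrm{opp}}$ has no isolated vertices, each of its connected components carries at least two vertices, and the decomposition $G = \ast_{n=1}^{\infty} G_n$ with every $G_n^{\mathrm{opp}}$ connected yields the first equality $C^*_Q(G) = \bigotimes_{n=1}^{\infty} C^*_Q(G_n)$. The factors coming from empty $G_n$ contribute $C^*_Q(G_n) = \mathbb{C}$ and are absorbed by the unit of the infinite tensor product; every remaining factor has $G_n^{\mathrm{opp}}$ connected with at least two vertices, which is exactly the situation treated in the previous section.

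Next I would identify each nonempty factor according to whether $G_n$ is finite or infinite. If $G_n$ is finite, Theorem \ref{thm:1} shows $C^*_Q(G_n)$ is purely infinite and simple, Proposition \ref{prop:4} shows it is nuclear and lies in the small bootstrap class $\mathfrak{N}$ (hence satisfies the UCT), so it is a Kirchberg algebra, and Proposition \ref{prop:main} supplies $\mathbf{K}_*(C^*_Q(G_n))$ together with the fact that $[1_{C^*_Q(G_n)}]_0$ generates $\mathbf{K}_0$. Comparing these invariants with those of $\mathcal{O}_{1+|\chi(G_n)|}$ --- namely $\mathbf{K}_0 = \mathbb{Z}_{|\chi(G_n)|}$ and $\mathbf{K}_1 = 0$ when $\chi(G_n) \neq 0$, and $\mathbf{K}_0 = \mathbb{Z}[1]_0$, $\mathbf{K}_1 = \mathbb{Z}$ (matching the definition of $\mathcal{O}_1$) when $\chi(G_n) = 0$ --- the Kirchberg--Phillips classification theorem gives a unital isomorphism $C^*_Q(G_n) \cong \mathcal{O}_{1+|\chi(G_n)|}$. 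If instead $G_n$ is infinite, Proposition \ref{prop:infty} provides nuclearity, membership in $\mathfrak{N}$, and $\mathbf{K}_0 = \mathbb{Z}[1]_0$, $\mathbf{K}_1 = 0$, while Theorem \ref{thm:1} again gives pure infiniteness and simplicity; Kirchberg--Phillips then forces $C^*_Q(G_n) \cong \mathcal{O}_{\infty}$, which under the convention $\chi(G_n) = \infty$ is precisely $\mathcal{O}_{1+|\chi(G_n)|}$.

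Finally I would combine the two displays to conclude $C^*_Q(G) = \bigotimes_{n} C^*_Q(G_n) \cong \bigotimes_{n} \mathcal{O}_{1+|\chi(G_n)|}$, using that unital isomorphisms of the individual factors induce an isomorphism of the (minimal) infinite tensor products. I do not expect a genuine obstacle here, since all of the analytic and $\mathbf{K}$-theoretic work is already packaged in the cited results; the single point requiring care is the bookkeeping of the classifying invariant, namely verifying in each case that the pair $(\mathbf{K}_0, [1]_0)$ together with $\mathbf{K}_1$ really coincides with that of the designated $\mathcal{O}_{1+|\chi(G_n)|}$ --- and in particular that $[1_{C^*_Q(G_n)}]_0$ generates $\mathbf{K}_0$, which is exactly what Proposition \ref{prop:main} and Proposition \ref{prop:infty} guarantee --- so that Kirchberg--Phillips applies through a \emph{unital} isomorphism and the tensor factorization is respected.
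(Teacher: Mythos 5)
Your proposal is correct and follows essentially the same route as the paper: the join decomposition of Remark \ref{tensor} gives the tensor factorization, and each factor is identified as $\mathcal{O}_{1+|\chi(G_n)|}$ by feeding Theorem \ref{thm:1}, Proposition \ref{prop:4}, Proposition \ref{prop:main} (resp.\ Proposition \ref{prop:infty} for infinite factors) into the Kirchberg--Phillips classification theorem. Your added care about matching $(\mathbf{K}_0,[1]_0,\mathbf{K}_1)$ with the convention for $\mathcal{O}_1$ and about unitality of the isomorphisms is exactly the bookkeeping the paper leaves implicit.
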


{\em Acknowledgements.} Most of the work on this paper was done when I was a graduate student at Texas A$\&$M
University. 
I would like to thank Ron Douglas for choosing this research topic and for the 
many useful conversations I had with him. 
I want to thank Marcelo Laca for sending me hardcopies of some of his
papers on the subject. 
Finally I want to thank Ken Dykema, Marcelo Laca and George Elliott for some discussions.

\end{document}